\def\imod#1{\allowbreak\mkern10mu({\operator@font mod}\,\,#1)}
\newtheorem{theorem}{Theorem}[section]
\newtheorem{lemma}{Lemma}[section]
\newtheorem{corollary}{Corollary}[section]
\newtheorem{conjecture}{Conjecture}[section]
\theoremstyle{definition}
\newtheorem{definition}{Definition}[section]
\newtheorem{remark}{Remark}[section]
\begin{document}
\begin{center}
\vskip 1cm{\LARGE\bf On Sparsely Schemmel Totient Numbers 
\vskip 1cm
\large
Colin Defant\footnote{This work was supported by National Science Foundation grant no. 1262930.}\\
Department of Mathematics\\
University of Florida\\
United States\\
cdefant@ufl.edu}
\end{center}
\vskip .2 in

\begin{abstract} 
For each positive integer $r$, let $S_r$ denote the $r^{th}$ Schemmel totient function, a multiplicative arithmetic function defined by 
\[S_r(p^{\alpha})=\begin{cases} 0, & \mbox{if } p\leq r; \\ p^{\alpha-1}(p-r), & \mbox{if } p>r \end{cases}\] 
for all primes $p$ and positive integers $\alpha$. The function $S_1$ is simply Euler's totient function $\phi$. Masser and Shiu have established several fascinating results concerning sparsely totient numbers, positive integers $n$ satisfying $\phi(n)<\phi(m)$ for all integers $m>n$. We define a sparsely Schemmel totient number of order $r$ to be a positive integer $n$ such that $S_r(n)>0$ and $S_r(n)<S_r(m)$ for all $m>n$ with $S_r(m)>0$. We then generalize some of the results of Masser and Shiu. 
\end{abstract} 

\section{Introduction} 
Throughout this paper, we will let $\mathbb{N}$ and $\mathbb{P}$ denote the set of positive integers and the set of prime numbers, respectively. For any prime $p$ and positive integer $n$, we will let $\omega(n)$ denote the number of distinct prime factors of $n$, and we will let $\upsilon_p(n)$ denote the exponent of $p$ in the prime factorization of $n$. Furthermore, we will let $n\#$ denote the product of all the prime numbers less than or equal to $n$ (with the convention $1\#=1$), and we will let $p_i$ denote the $i^{th}$ prime number. 
\par 
The Euler totient function $\phi(n)$ counts the number of positive integers less than or equal to $n$ that are relatively prime to $n$. In 1869, V. Schemmel introduced a class of functions $S_r$, now known as Schemmel totient functions, that generalize Euler's totient function. $S_r(n)$ counts the number of positive integers $k\leq n$ such that $\gcd(k+j,n)=1$ for all $j\in\{0,1,\ldots,r-1\}$. Clearly, $S_1=\phi$. It has been shown \cite{schemmel69} that $S_r$ is a multiplicative function that satisfies 
\[S_r(p^{\alpha})=\begin{cases} 0, & \mbox{if } p\leq r \\ p^{\alpha-1}(p-r), & \mbox{if } p>r \end{cases}\] 
for all primes $p$ and positive integers $\alpha$. For any positive integer $r$, we will let $B_r$ denote the set of positive integers whose smallest prime factor is greater than $r$, and we will convene to let $1\in B_r$. Equivalently, \[B_r=\{n\in\mathbb{N}\colon S_r(n)>0\}.\]  
\par 
Masser and Shiu have studied the set $F$ of positive integers $n$ that satisfy $\phi(n)<\phi(m)$ for all $m>n$ \cite{Masser86}. These integers are known as sparsely totient numbers, and they motivate the following definition. 
\begin{definition} \label{Def1.1} 
Let $r$  be a positive integer. A positive integer $n$ is a \textit{sparsely
Schemmel totient number of order $r$} if $n\in B_r$ and $S_r(n)<S_r(m)$ for all $m\in B_r$ with $m>n$. We will let $F_r$ be the set of all sparsely Schemmel totient numbers of order $r$. 
\end{definition} 
\begin{remark} \label{Rem2.1} 
Lee-Wah Yip has shown that if $r$ is a positive integer, then there exists a positive constant $c_1(r)$ such that $\displaystyle{S_r(n)\geq\frac{c_1(r)n}{(\log\log 3n)^r}}$ for all $n\in B_r$ \cite{Yip89}. Therefore, each set $F_r$ is infinite.  
\end{remark}  
\par 
The aim of this paper is to modify some of the proofs that Masser and Shiu used to establish results concerning sparsely totient numbers in order to illustrate how those results generalize to results concerning sparsely Schemmel totient numbers. 
\section{A Fundamental Construction} 
The fundamental result in Masser and Shiu's paper, upon which all subsequent theorems rely, is a construction of a certain subset of $F$, so we will give a similar construction of subsets of the sets $F_r$. 
\begin{lemma} \label{Lem2.1}
Fix some positive integer $r$, and suppose $x_1,x_2,\ldots,x_s,y_1,$ $y_2,\ldots,y_s,X,Y$ are real numbers such that $r<x_i\leq y_i$ for all $i\in\{1,2,\ldots,s\}$. If $Y\geq\max(x_1,x_2,\ldots,x_s)$ and $\displaystyle{X\prod_{i=1}^sx_i<Y\prod_{i=1}^sy_i}$, then \[(X-r)\prod_{i=1}^s(x_i-r)<(Y-r)\prod_{i=1}^s(y_i-r).\]   
\end{lemma}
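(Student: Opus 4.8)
The plan is to reduce the inequality to two clean, essentially one‑dimensional facts. First dispose of the degenerate case: if $X\le r$ then the left-hand side is $\le 0$ while the right-hand side is a product of strictly positive numbers (since $Y\ge\max_i x_i>r$ and each $y_i\ge x_i>r$), so the inequality is immediate. Hence assume $X>r$; then all of $X,Y,x_1,\ldots,x_s,y_1,\ldots,y_s$ exceed $r$, and it suffices to show $\dfrac{(Y-r)\prod_{i=1}^s(y_i-r)}{(X-r)\prod_{i=1}^s(x_i-r)}>1$. Write $\lambda_i=y_i/x_i\ge 1$; dividing the hypothesis $X\prod x_i<Y\prod y_i$ by $\prod x_i>0$ turns it into the single statement $X<Y\prod_{i=1}^s\lambda_i$, which is the only way the hypothesis is used.

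The first ingredient is a pointwise replacement: for each $i$,
\[\frac{y_i-r}{x_i-r}=\frac{\lambda_i x_i-r}{x_i-r}\ \ge\ \frac{\lambda_i Y-r}{Y-r}.\]
After clearing the positive denominators $(x_i-r)(Y-r)$ and cancelling, this is equivalent to $r(\lambda_i-1)(Y-x_i)\ge 0$, which holds because $\lambda_i\ge 1$ and $x_i\le Y$. (This is exactly the step that consumes the hypothesis $Y\ge\max(x_1,\ldots,x_s)$.) The second ingredient is the auxiliary inequality
\[\prod_{i=1}^s(\lambda_i Y-r)\ \ge\ (Y-r)^{s-1}\Bigl(Y\prod_{i=1}^s\lambda_i-r\Bigr),\]
valid for any reals $\lambda_i\ge 1$ since $Y>r$; I would prove it by induction on $s$ (the case $s=1$ is an equality), where, writing $\Lambda=\prod_{i<s}\lambda_i$, the inductive step reduces after dividing by $(Y-r)^{s-2}>0$ to $(\lambda_s Y-r)(Y\Lambda-r)\ge(Y-r)(Y\lambda_s\Lambda-r)$, and the difference of the two sides works out to exactly $rY(\lambda_s-1)(\Lambda-1)\ge 0$.

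Assembling these: since $\prod(y_i-r)=\prod(x_i-r)\prod\frac{y_i-r}{x_i-r}$, the first ingredient gives
\[(Y-r)\prod_{i=1}^s(y_i-r)\ \ge\ \prod_{i=1}^s(x_i-r)\cdot\frac{\prod_{i=1}^s(\lambda_i Y-r)}{(Y-r)^{s-1}}\ \ge\ \prod_{i=1}^s(x_i-r)\Bigl(Y\prod_{i=1}^s\lambda_i-r\Bigr),\]
the last step being the auxiliary inequality. Finally $X<Y\prod\lambda_i$ gives $Y\prod\lambda_i-r>X-r$, and multiplying by $\prod(x_i-r)>0$ yields $\prod(x_i-r)\bigl(Y\prod\lambda_i-r\bigr)>(X-r)\prod(x_i-r)$, completing the chain.

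The one genuine obstacle is that the hypotheses do not force $(X,x_1,\ldots,x_s)\le(Y,y_1,\ldots,y_s)$ coordinatewise: we are given $Y\ge\max_i x_i$ but not $Y\ge X$, so when $Y<X$ a naive "every factor increases" monotonicity argument breaks down, since $(X-r)\mapsto(Y-r)$ actually decreases. The device that rescues the argument is precisely to bound each $x_i$ by its worst case $x_i=Y$ (legitimate thanks to $x_i\le Y$); this collapses the $s$‑variable comparison to the single-variable auxiliary inequality, in which the gain $Y\prod\lambda_i>X$ furnished by the product hypothesis outweighs the loss coming from $X>Y$. Tracking strictness is routine: the closing inequality is strict because $Y\prod\lambda_i>X$ is strict and $\prod(x_i-r)>0$.
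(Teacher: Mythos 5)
Your proof is correct; I checked the two algebraic identities at its heart and they come out exactly as you claim: clearing denominators in $\frac{\lambda_i x_i-r}{x_i-r}\ge\frac{\lambda_i Y-r}{Y-r}$ leaves $r(\lambda_i-1)(Y-x_i)\ge 0$, and the inductive step of your auxiliary product inequality leaves $rY(\lambda_s-1)(\Lambda-1)\ge 0$. The route, however, is genuinely different from the paper's. The paper inducts directly on $s$ for the target inequality: assuming $X\ge y_s$ (the case $X<y_s$ being immediate), it replaces $X$ by $Xx_s/y_s$ so as to preserve the product hypothesis with one fewer variable, applies the induction hypothesis, and closes with the single exchange inequality $\left(\frac{Xx_s}{y_s}-r\right)(y_s-r)\ge(X-r)(x_s-r)$ — which, like your first ingredient, ultimately rests on $X\ge y_s\ge x_s$ and reduces to the same kind of sign computation. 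You instead normalize by the ratios $\lambda_i=y_i/x_i$, push every $x_i$ up to its worst case $Y$ (this is where $Y\ge\max_i x_i$ is spent), and then collapse the resulting product $\prod(\lambda_i Y-r)$ via a separate, self-contained inequality so that the product hypothesis is used only once, at the very end, in the one-variable form $Y\prod\lambda_i>X$. What your version buys is a clean accounting of which hypothesis does what and no case split on $X$ versus $y_s$ (only the trivial split $X\le r$); what the paper's version buys is brevity, since its single induction carries both the smoothing and the collapsing at once. Both arguments are elementary and both give the stated strict inequality.
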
 
\begin{proof} 
The proof is by induction on $s$, so we will assume that $s\geq 2$ and that the lemma is true if we replace $s$ with $s-1$. Note that \[\prod_{i=1}^s(x_i-r)\leq(Y-r)\prod_{i=1}^{s-1}(y_i-r),\] so the proof is simple if $X<y_s$. Therefore, we will assume that $X\geq y_s$. If we write the inequality $\displaystyle{X\prod_{i=1}^sx_i<Y\prod_{i=1}^sy_i}$ as 
$\displaystyle{\frac{Xx_s}{y_s}\prod_{i=1}^{s-1}x_i<Y\prod_{i=1}^{s-1}y_i}$, then the induction hypothesis tells us that 
\begin{equation} \label{Eq2.1} 
\left(\frac{Xx_s}{y_s}-r\right)\prod_{i=1}^{s-1}(x_i-r)<(Y-r)\prod_{i=1}^{s-1}(y_i-r).
\end{equation}  
Multiplying each side of \eqref{Eq2.1} by $y_s-r$, we see that it suffices to show, in order to complete the induction step, that 
\begin{equation} \label{Eq2.2} 
\left(\frac{Xx_s}{y_s}-r\right)(y_s-r)\geq(X-r)(x_s-r). 
\end{equation}
We may rewrite \eqref{Eq2.2} as $\displaystyle{r\left(\frac{Xx_s}{y_s}+y_s\right)\leq r(X+x_s)}$, or, equivalently, \\ $\displaystyle{y_s-x_s\leq X\left(1-\frac{x_s}{y_s}\right)}$. This inequality holds because $X\geq y_s$, so we have completed the induction step of the proof. 
\par 
For the case $s=1$, we note again that the proof is trivial if $X<y_1$, so we will assume that $X\geq y_1$. This implies that $\displaystyle{y_1-x_1\leq X\left(1-\frac{x_1}{y_1}\right)}$, which we may rewrite as $\displaystyle{y_1+\frac{Xx_1}{y_1}\leq X+x_1}$. Multiplying this last inequality by $-r$ and adding $Xx_1+r^2$ to each side, we get \[Xx_1-r\left(y_1+\frac{Xx_1}{y_1}\right)+r^2\geq Xx_1-r(X+x_1)+r^2,\] so $\displaystyle{(y_1-r)\left(\frac{Xx_1}{y_1}-r\right)\geq(x_1-r)(X-r)}$. As $x_1X<y_1Y$ by hypothesis, we find that $(y_1-r)(Y-r)>(x_1-r)(X-r)$.   
\end{proof} 
In what follows, we will let $b(1)=0$, and, for $r\geq 2$, we will let $b(r)$ denote the largest integer such that $p_{b(r)}\leq r$.
\begin{theorem} \label{Thm2.1} 
Let $r$ be a positive integer, and let $\ell$ and $k$ be nonnegative integers such that 
$k\geq b(r)+2$. Suppose $d$ is an element of $B_r$ such that $d<p_{k+1}-r$ and $d(p_{k+\ell}-r)<(d+1)(p_k-r)$. If we set $\displaystyle{n=dp_{k+\ell}\prod_{i=b(r)+1}^{k-1}p_i}$, then $n\in F_r$. 
\end{theorem}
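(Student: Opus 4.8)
The plan is to check the two defining properties of $F_r$. That $n\in B_r$ is immediate, since every prime factor of $n$ either divides $d$, or equals $p_{k+\ell}$, or lies among $p_{b(r)+1},\dots,p_{k-1}$, and all of these exceed $r$ (using $k\geq b(r)+2$ and $d\in B_r$). To compute $S_r(n)$ I would use $S_r(N)=N\prod_{p\mid N}(1-r/p)$ for $N\in B_r$, which is immediate from multiplicativity; checking separately the degenerate possibility $p_k\mid d$ (which forces $d=p_k$ and $\ell=0$, since $d<p_{k+1}<2p_k$), this gives
\[S_r(n)\leq d(p_{k+\ell}-r)\prod_{i=b(r)+1}^{k-1}(p_i-r),\]
with equality when $p_k\nmid d$. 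Put $\nu=k-b(r)=\omega(n)\geq 2$.

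Now let $m\in B_r$ with $m>n$, and suppose toward a contradiction that $S_r(m)\leq S_r(n)$. Let $q_1<\dots<q_t$ be the primes of $m$ ($t=\omega(m)$); since $m\in B_r$, $q_i\geq p_{b(r)+i}$ for each $i$. The first task is to show $t=\nu$. Since $S_r(m)\geq\prod_{i=1}^t(q_i-r)\geq\prod_{j=b(r)+1}^{b(r)+t}(p_j-r)$, if $t\geq\nu+1$ then $S_r(m)\geq(p_k-r)(p_{k+1}-r)\prod_{i=b(r)+1}^{k-1}(p_i-r)$, which exceeds $S_r(n)$ because $d+1\leq p_{k+1}-r$ and $d(p_{k+\ell}-r)<(d+1)(p_k-r)$; hence $t\leq\nu$. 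Conversely, $S_r(m)\leq S_r(n)$ and $m>n$ give $\prod_{i=1}^t(1-r/q_i)<S_r(n)/n\leq\prod_{i=b(r)+1}^{k-1}(1-r/p_i)$, and since $1-r/q_i\geq 1-r/p_{b(r)+i}$ this is impossible unless $b(r)+t\geq k$, i.e. $t\geq\nu$. So $t=\nu$, and one more application of the same density inequality yields $q_\nu<p_{k+\ell}$; in particular $\ell\geq 1$ (otherwise $p_{k+\ell}=p_k\leq q_\nu$).

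The contradiction should now be extracted from Lemma~\ref{Lem2.1}. With $s=\nu-1$, $x_i=p_{b(r)+i}$, $y_i=q_i$ for $1\leq i\leq\nu-1$, and with $X=d(p_{k+\ell}-r)+r$, $Y=u(q_\nu-r)+r$ where $u=m/\operatorname{rad}(m)$, one has $(X-r)\prod_{i=1}^{\nu-1}(x_i-r)=S_r(n)$ and $(Y-r)\prod_{i=1}^{\nu-1}(y_i-r)=S_r(m)$, and the requirements $r<x_i\leq y_i$ and $Y\geq\max_i x_i$ of Lemma~\ref{Lem2.1} hold. Hence the contrapositive of Lemma~\ref{Lem2.1} turns $S_r(m)\leq S_r(n)$ into $X\prod_{i=1}^{\nu-1}x_i\geq Y\prod_{i=1}^{\nu-1}y_i$, which rearranges to
\[m\leq n-r(d-1)\prod_{i=b(r)+1}^{k-1}p_i+r(u-1)\prod_{i=1}^{\nu-1}q_i.\]
If $m$ is squarefree this reads $m\leq n$, contradicting $m>n$; and the same contradiction follows whenever $q_i=p_{b(r)+i}$ for all $i<\nu$, using the bound $u\leq d$ (which comes out of $S_r(m)\leq S_r(n)$ together with $q_i\geq p_{b(r)+i}$ and $d(p_{k+\ell}-r)<(d+1)(p_k-r)$).

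I expect the main obstacle to be the remaining case, in which $m$ is not squarefree and some intermediate prime $q_i$ ($i<\nu$) strictly exceeds $p_{b(r)+i}$. Here one loses nothing by passing to radicals: $m>n$ and $u\leq d$ force $\operatorname{rad}(m)>\operatorname{rad}(n)$, and Lemma~\ref{Lem2.1} applied to the radicals gives $S_r(\operatorname{rad}(m))>S_r(\operatorname{rad}(n))=S_r(n)/d$, so $S_r(m)=u\,S_r(\operatorname{rad}(m))>(u/d)\,S_r(n)$. Converting this last shortfall into a genuine contradiction is the delicate point: it requires exploiting that the $q_i$ now exceed their minimal values (so $\prod_i(q_i-r)$ is correspondingly larger) together with the full strength of $d<p_{k+1}-r$ and $d(p_{k+\ell}-r)<(d+1)(p_k-r)$. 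Once that case is closed, no $m>n$ in $B_r$ has $S_r(m)\leq S_r(n)$, so $n\in F_r$.
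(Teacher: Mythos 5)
Your overall architecture is the paper's: bound $S_r(n)\leq d(p_{k+\ell}-r)\prod_{i=b(r)+1}^{k-1}(p_i-r)$, eliminate $\omega(m)\neq k-b(r)$ by the density/size dichotomy, dispose of the cofactor $u=m/\prod_i q_i\geq d+1$ via $d(p_{k+\ell}-r)<(d+1)(p_k-r)$, and finish the remaining case with Lemma \ref{Lem2.1}. All of that is correct. The gap is exactly where you flag it, and it is caused by how you instantiate the lemma: taking $Y=u(q_\nu-r)+r$ puts the cofactor $u$ on the $Y$-side, so the contrapositive only yields $m\leq n-r(d-1)\prod_{i=b(r)+1}^{k-1}p_i+r(u-1)\prod_{i=1}^{\nu-1}q_i$, which does not contradict $m>n$ once $u\geq 2$ and some $q_i$ exceeds $p_{b(r)+i}$. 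The radical-passing idea only recovers $S_r(m)>(u/d)S_r(n)$, which is strictly weaker than what is needed, so the final case remains open in your write-up.

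The fix is to divide the cofactor into $X$ rather than multiply it into $Y$. Take $s=k-1-b(r)$, $x_i=p_{b(r)+i}$, $y_i=q_i$, $X=\tfrac{d}{u}\,p_{k+\ell}$ and $Y=q_{k-b(r)}$ (Lemma \ref{Lem2.1} does not require $X$ to be an integer, and $Y=q_{k-b(r)}\geq p_k>p_{k-1}=\max_i x_i$). The hypothesis $X\prod_i x_i<Y\prod_i y_i$ is then literally $n/u<m/u$, so the lemma applies in the forward direction and gives $\prod_{i=1}^{k-b(r)}(q_i-r)>\bigl(\tfrac{d}{u}p_{k+\ell}-r\bigr)\prod_{i=b(r)+1}^{k-1}(p_i-r)$. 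Multiplying by $u$ and using $u\leq d$ yields
\[S_r(m)>(dp_{k+\ell}-ru)\prod_{i=b(r)+1}^{k-1}(p_i-r)\geq d(p_{k+\ell}-r)\prod_{i=b(r)+1}^{k-1}(p_i-r)\geq S_r(n),\]
since the cofactor now appears only in the subtracted term $ru\leq rd$, which is exactly the right direction. With that substitution your argument closes; the rest of what you wrote (including the observation that equality in your bound on $S_r(n)$ may fail, which only helps) is fine.
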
 
\begin{proof} 
First, note that $n\in B_r$ and 
\begin{equation} \label{Eq2.3} 
S_r(n)\leq d(p_{k+\ell}-r)\prod_{i=b(r)+1}^{k-1}(p_i-r). 
\end{equation} 
Using the hypothesis $d(p_{k+\ell}-r)<(d+1)(p_k-r)$, we get 
\begin{equation} \label{Eq2.4} 
S_r(n)<(d+1)\prod_{i=b(r)+1}^k(p_i-r), 
\end{equation} 
from which the hypothesis $d<p_{k+1}-r$ yields 
\begin{equation} \label{Eq2.5} 
S_r(n)<\prod_{i=b(r)+1}^{k+1}(p_i-r). 
\end{equation} 
Now, choose some arbitrary $m\in B_r$ with $m>n$. We will show that $S_r(m)>S_r(n)$. There is a unique integer $t>b(r)$ such that 
\[\prod_{i=b(r)+1}^tp_i\leq m<\prod_{i=b(r)+1}^{t+1}p_i.\]  
Clearly, $\omega(m)\leq t-b(r)$, so $\displaystyle{\frac{S_r(m)}{m}\geq\prod_{i=b(r)+1}^t\left(1-\frac{r}{p_i}\right)}$. 
This implies that 
$\displaystyle{S_r(m)\geq\prod_{i=b(r)+1}^t(p_i-r)}$. If $t\geq k+1$, then we may use \eqref{Eq2.5} to conclude that $S_r(n)<S_r(m)$. Therefore, let us assume that $t\leq k$. Then $\omega(m)\leq k-b(r)$. Suppose $\omega(m)\leq k-1-b(r)$ so that $\displaystyle{\frac{S_r(m)}{m}\geq\prod_{i=b(r)+1}^{k-1}\left(1-\frac{r}{p_i}\right)}$. From \eqref{Eq2.3}, we have $\displaystyle{\frac{S_r(n)}{n}<\prod_{i=b(r)+1}^{k-1}\left(1-\frac{r}{p_i}\right)}$, so $\displaystyle{\frac{S_r(n)}{n}<\frac{S_r(m)}{m}}$. Because $m>n$, we see that $S_r(n)<S_r(m)$.  
\par 
Now, assume $\omega(m)=k-b(r)$. Then we may write $\displaystyle{m=\mu\prod_{i=1}^{k-b(r)}q_i}$, where $\mu$ is a positive integer whose prime factors are all in the set $\{q_1,q_2,\ldots,q_{k-b(r)}\}$ and, for all $i,j\in\{1,2,\ldots,k-b(r)\}$ with $i<j$, $q_i$ is a prime and $p_{b(r)+i}\leq q_i<q_j$. This means that $\displaystyle{S_r(m)=\mu\prod_{i=1}^{k-b(r)}(q_i-r)}$. If $\mu\geq d+1$, then we may use \eqref{Eq2.4} to find that $\displaystyle{S_r(n)<\mu\prod_{i=b(r)+1}^k(p_i-r)=\mu\prod_{i=1}^{k-b(r)}(p_{b(r)+i}-r)\leq S_r(m)}$. Hence, we may assume that $\mu\leq d$. Because $m>n$, we have 
\begin{equation} \label{Eq2.6}
\prod_{i=1}^{k-b(r)}q_i>\frac{d}{\mu}p_{k+\ell}\prod_{i=b(r)+1}^{k-1}p_i.
\end{equation} 
For each $i\in\{1,2,\ldots,k-1-b(r)\}$, let $x_i=p_{b(r)+i}$, and let $y_i=q_i$. If we set $s=k-1-b(r)$, $\displaystyle{X=\frac{d}{\mu}p_{k+\ell}}$, and $Y=q_{k-b(r)}$, then we may use Lemma \ref{Lem2.1} and \eqref{Eq2.6} to conclude that     
\[\prod_{i=1}^{k-b(r)}(q_i-r)>\left(\frac{d}{\mu}p_{k+\ell}-r\right)\prod_{i=b(r)+1}^{k-1}(p_i-r).\] 
Thus, because $\mu\leq d$, we have 
\[S_r(m)=\mu\prod_{i=1}^{k-b(r)}(q_i-r)>(dp_{k+\ell}-r\mu)\prod_{i=b(r)+1}^{k-1}(p_i-r)\] 
\[\geq d(p_{k+\ell}-r)\prod_{i=b(r)+1}^{k-1}(p_i-r).\] 
Recalling \eqref{Eq2.3}, we have $S_r(m)>S_r(n)$, so the proof is complete. 
\end{proof} 
\section{Prime Divisors of Sparsely Schemmel \\ Totient Numbers}
In their paper, Masser and Shiu casually mention that $2$ is the only sparsely totient prime power \cite{Masser86}, but their brief proof utilizes the fact that, for $r=1$, $r+1$ is prime. We will see that if $r+1$ is prime, then $r+1$ is indeed the only sparsely Schemmel totient number of order $r$ that is a prime power. However, if $r+1$ is composite, there could easily be multiple sparsely Schemmel totient numbers  of order $r$ that are prime powers. The following results will provide an upper bound (in terms of $r$) for the values of sparsely Shemmel totient prime powers of order $r$. 
\begin{lemma} \label{Lem3.1} 
If $j\in\mathbb{N}\backslash\{1,2,4\}$, then $\displaystyle{\frac{p_{j+1}}{p_j}\leq\frac{7}{5}}$. 
\end{lemma}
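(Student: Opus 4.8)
The plan is to split the argument into a large-$j$ range handled by an explicit prime-gap estimate and a short finite list of small cases checked by hand. I would invoke a result of Nagura: for every real $x\geq 25$ there is a prime strictly between $x$ and $\frac{6}{5}x$. Applying this with $x=p_j$ is legitimate precisely when $p_j\geq 25$; since $p_9=23$ and $p_{10}=29$, this covers every $j\geq 10$. For such $j$, Nagura's theorem produces a prime $q$ with $p_j<q<\frac{6}{5}p_j$, and because $p_{j+1}$ is by definition the least prime exceeding $p_j$, we obtain $p_{j+1}\leq q<\frac{6}{5}p_j<\frac{7}{5}p_j$, which is the asserted bound (with room to spare).

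It then remains to treat $j\in\{3,5,6,7,8,9\}$, i.e., the elements of $\mathbb{N}\setminus\{1,2,4\}$ not already handled. This is a finite computation using the first few primes: $p_4/p_3=7/5$, $p_6/p_5=13/11$, $p_7/p_6=17/13$, $p_8/p_7=19/17$, $p_9/p_8=23/19$, and $p_{10}/p_9=29/23$, each of which is at most $7/5$, with equality occurring only at $j=3$. For completeness I would also remark that the three excluded values are genuinely exceptional, since $p_2/p_1=3/2$, $p_3/p_2=5/3$, and $p_5/p_4=11/7$ all exceed $7/5$; this explains the exclusion in the hypothesis.

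The only substantive point is the choice of input for the large-$j$ range: Bertrand's postulate by itself gives merely $p_{j+1}<2p_j$, which is far too weak, so one needs a quantitatively sharper statement. Nagura's $\frac{6}{5}$-factor result valid from $x=25$ onward is one convenient option, though any of the later refinements (Schoenfeld, Dusart, and so on) would do equally well. I do not anticipate a real obstacle here: once an explicit estimate of this strength is cited, the proof reduces to a two-line deduction together with a short table of small primes.
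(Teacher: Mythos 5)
Your proof is correct, and it follows the same overall template as the paper's --- an explicit prime-gap estimate for large $j$ plus a finite verification --- but with a different key input that changes the character of the finite part. The paper cites Dusart's bound (a prime in $[x,\, x + x/(25\log^2 x)]$ for $x \geq 396\,738$), which forces a computer search over all primes below $396\,738$ to handle the remaining cases; you instead cite Nagura's theorem (a prime in $(x, \tfrac{6}{5}x)$ for $x \geq 25$), which reduces the leftover cases to the six values $j \in \{3,5,6,7,8,9\}$, checkable by hand in a short table. Your version is therefore more self-contained and more elementary in its verification step, at the cost of invoking a classical result the paper does not reference; the paper's choice of Dusart is overkill for the $\tfrac{7}{5}$ bound actually needed (Dusart's interval gives a far sharper ratio for large $x$), and your observation that any estimate of Nagura's strength suffices is accurate. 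One small point of care that you handled correctly: Nagura's hypothesis $x \geq 25$ is satisfied by $p_j$ exactly when $j \geq 10$ (since $p_{10} = 29$ is the least prime exceeding $25$), so your case split is airtight, and your table confirming equality at $j=3$ and the failure of the bound at the excluded indices $j \in \{1,2,4\}$ is a nice sanity check.
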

\begin{proof}
Pierre Dusart \cite{Dusart10} has shown that, for $x\geq 396\hspace{0.75 mm} 738$, there must be at  least one prime in the interval $\displaystyle{\left[x, x+\frac{x}{25\log^2x}\right]}$. Therefore, whenever $p_j>396\hspace{0.75 mm} 738$, we may set $x=p_j+1$ to get $\displaystyle{p_{j+1}\leq (p_j+1)+\frac{p_j+1}{25\log^2(p_j+1)}}$ $\displaystyle{<\frac{7}{5}p_j}$. Using Mathematica 9.0 \cite{Wolfram09}, we may quickly search through all the primes less than $396\hspace{0.75 mm} 738$ to conclude the desired result.   
\end{proof} 
\begin{lemma} \label{Lem3.2}
Let $p$ be a prime, and let $r$, $\alpha$, and $\gamma$ be positive integers such that $\alpha>1$ and $p\nmid\gamma$. If $p^{\alpha}\gamma\in F_r$, then $p^{\alpha-1}\gamma\in F_r$. 
\end{lemma}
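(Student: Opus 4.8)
The plan is to argue by contradiction: assuming $p^{\alpha-1}\gamma \notin F_r$, I will produce some $m \in B_r$ with $m > p^\alpha\gamma$ and $S_r(m) \le S_r(p^\alpha\gamma)$, contradicting $p^\alpha\gamma \in F_r$. First I would observe that $p^{\alpha-1}\gamma \in B_r$ (its prime factors are a subset of those of $p^\alpha\gamma$, all exceeding $r$) and that $p^{\alpha-1}\gamma < p^\alpha\gamma$, so if $p^{\alpha-1}\gamma \notin F_r$ there must exist $m' \in B_r$ with $m' > p^{\alpha-1}\gamma$ and $S_r(m') \le S_r(p^{\alpha-1}\gamma)$. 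The natural candidate to compare against $p^\alpha\gamma$ is then $m = p\,m'$.

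The key computation is to control $S_r(pm')$ and the size of $pm'$. Since $S_r$ is multiplicative and $S_r(p^\beta) = p^{\beta-1}(p-r)$, one has $S_r(p^\alpha\gamma) = p^{\alpha-1}(p-r)S_r(\gamma)$ and $S_r(p^{\alpha-1}\gamma) = p^{\alpha-2}(p-r)S_r(\gamma)$ when $\alpha \ge 2$ and $p \nmid \gamma$ (the case $\alpha = 2$ giving $S_r(p\gamma) = (p-r)S_r(\gamma)$, which is still the correct multiplicative value). Thus $S_r(p^\alpha\gamma) = p\cdot S_r(p^{\alpha-1}\gamma)$. Now I split into two cases according to whether $p \mid m'$. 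If $p \mid m'$, then $S_r(pm') = p\,S_r(m') \le p\,S_r(p^{\alpha-1}\gamma) = S_r(p^\alpha\gamma)$, and $pm' > p\cdot p^{\alpha-1}\gamma = p^\alpha\gamma$, giving the contradiction directly. If $p \nmid m'$, then $S_r(pm') = (p-r)S_r(m') \le (p-r)S_r(p^{\alpha-1}\gamma) < p\,S_r(p^{\alpha-1}\gamma) = S_r(p^\alpha\gamma)$, while again $pm' > p^\alpha\gamma$; so once more $m = pm'$ contradicts $p^\alpha\gamma \in F_r$.

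The main obstacle — really the only subtle point — is making sure every multiplicativity step is legitimate, i.e. that $S_r(pm') = p\,S_r(m')$ when $p \mid m'$ and $S_r(pm') = (p-r)S_r(m')$ when $p \nmid m'$; both follow from writing $m' = p^\beta\delta$ with $p \nmid \delta$ and using $S_r(p^{\beta+1}) = p\cdot S_r(p^\beta)$ (valid since $p > r$, as $p^{\alpha-1}\gamma \in B_r$) together with $S_r(p\delta) = (p-r)S_r(\delta)$. I would also note at the outset that $p > r$ follows from $p^\alpha\gamma \in B_r$, so that $S_r(p^\beta) > 0$ throughout and no degenerate zero values intrude. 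With these facts in hand the inequalities are immediate, and the contradiction closes the proof.
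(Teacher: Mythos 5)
Your proposal is correct and follows essentially the same route as the paper: assume $p^{\alpha-1}\gamma\notin F_r$, take a witness $m'\in B_r$ with $m'>p^{\alpha-1}\gamma$ and $S_r(m')\leq S_r(p^{\alpha-1}\gamma)$, and show $pm'$ contradicts $p^{\alpha}\gamma\in F_r$. The only cosmetic difference is that the paper compresses your two cases into the single inequality $S_r(pm')\leq pS_r(m')$, which holds whether or not $p\mid m'$.
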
 
\begin{proof} 
Suppose, for the sake of finding a contradiction, that $p^{\alpha-1}\gamma\not\in F_r$ and $p^{\alpha}\gamma\in F_r$. Because $p^{\alpha}\gamma\in F_r\subseteq B_r$, we know that $p^{\alpha-1}\gamma\in B_r$. Then, because $p^{\alpha-1}\gamma\not\in F_r$, there must exist some $m\in B_r$ such that $m>p^{\alpha-1}\gamma$ and $S_r(m)\leq S_r(p^{\alpha-1}\gamma)=p^{\alpha-2}(p-r)S_r(\gamma)$. However, this implies that $pm>p^{\alpha}\gamma$ and $S_r(pm)\leq pS_r(m)\leq p^{\alpha-1}(p-r)S_r(\gamma)=S_r(p^{\alpha}\gamma)$, which contradicts the fact that $p^{\alpha}\gamma\in F_r$. 
\end{proof} 
\begin{theorem} \label{Thm3.1} 
If $p$ is a prime and $r$ is a positive integer, then $p\in F_r$ if and only if $r<p<(p_{b(r)+1}-r)(p_{b(r)+2}-r)+r$.  
\end{theorem}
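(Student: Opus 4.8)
The plan is to handle the two implications separately, exploiting that $S_r(p)=p-r$ for a prime $p>r$, that $S_r\bigl(p_{b(r)+1}p_{b(r)+2}\bigr)=\bigl(p_{b(r)+1}-r\bigr)\bigl(p_{b(r)+2}-r\bigr)$, and that a prime $p$ belongs to $B_r$ precisely when $p>r$ (so the case $p\le r$ is trivial). Write $P=p_{b(r)+1}$ and $Q=p_{b(r)+2}$ for the two smallest primes exceeding $r$; the claimed bound is exactly the point at which $S_r(p)$ ceases to be smaller than $S_r(PQ)=(P-r)(Q-r)$. One auxiliary inequality will be used in both directions: $Q-P\le r$, i.e.\ $Q-r\le P$, for every $r\in\mathbb{N}$. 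I would obtain this from Lemma \ref{Lem3.1}: when $b(r)+1\notin\{1,2,4\}$ we have $Q/P\le\tfrac75$, hence $Q-P\le\tfrac25 P<\tfrac25(2r)<r$ using $P<2r$ from Bertrand's postulate; the remaining values $b(r)+1\in\{1,2,4\}$, that is $r\in\{1,2,5,6\}$, are checked by hand.

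For ``$r<p<(P-r)(Q-r)+r\implies p\in F_r$'', first note $p\in B_r$; then fix an arbitrary $m\in B_r$ with $m>p$, and since $S_r(p)=p-r$ it is enough to show $S_r(m)>p-r$, which I would do by cases on $\omega(m)$. If $\omega(m)\ge 2$, the two smallest prime factors of $m$ are at least $P$ and $Q$, whence $S_r(m)\ge(P-r)(Q-r)>p-r$. If $m$ is prime, then $m>p>r$ gives $S_r(m)=m-r>p-r$. If $m=q^{a}$ with $q$ prime and $a\ge 2$, then $S_r(m)=q^{a-1}(q-r)\ge q(q-r)$; this disposes of $q\ge Q$ immediately, and in the only remaining subcase $q=P$ the auxiliary inequality yields $S_r(m)\ge P(P-r)\ge(P-r)(Q-r)>p-r$.

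For the converse I would prove the contrapositive: given a prime $p$ with $p\ge(P-r)(Q-r)+r$, I must exhibit $m\in B_r$ with $m>p$ and $S_r(m)\le p-r$. If $p<PQ$, then $m=PQ$ works at once, since $PQ>p$ and $S_r(PQ)=(P-r)(Q-r)\le p-r$. If instead $p\ge PQ$, I would look for a prime $p'$ in the interval $\bigl(p/P,\ r+\tfrac{p-r}{P-r}\bigr]$ and set $m=Pp'$; then $p'>p/P\ge Q>P$ forces $P$ and $p'$ to be distinct, so $\omega(m)=2$, $m=Pp'>p$, and $S_r(m)=(P-r)(p'-r)\le p-r$ because $p'-r\le\tfrac{p-r}{P-r}$. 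The existence of such a prime will follow from Bertrand's postulate as long as $2p/P\le r+\tfrac{p-r}{P-r}$, and I expect this inequality to be the real content of the argument: clearing denominators turns it into $p(P-2r)\le rP(P-r-1)$, which is automatic because $P\le 2r$ (Bertrand, with equality only at $r=1$) makes the left side nonpositive while $P\ge r+1$ makes the right side nonnegative. To finish I would observe that $(P-r)(Q-r)+r<PQ$, so the cases $p<PQ$ and $p\ge PQ$ together cover every prime $p\ge(P-r)(Q-r)+r$, and I would verify the boundary case $r=1$ separately.
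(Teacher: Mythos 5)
Your proof is correct. The forward direction follows the paper's decomposition almost exactly (split on $\omega(m)\geq 2$, $m$ prime, $m$ a higher prime power), but—writing $P=p_{b(r)+1}$ and $Q=p_{b(r)+2}$ as you do—you pivot the prime-power subcase on the inequality $Q-P\leq r$ rather than the paper's $Q<2r$; both come from Lemma \ref{Lem3.1} plus a finite check (yours over $r\in\{1,2,5,6\}$, the paper's over $r\in\{1,2,3,5\}$), so this is only a cosmetic difference. The converse is where you genuinely diverge, and your route is cleaner: the paper locates $p$ between $Pp_t$ and $Pp_{t+1}$ and needs the sharper estimates $P\leq\frac{11}{7}r$ and $p_{t+1}\leq\frac{11}{7}p_t$ for $r>3$, with ad hoc constructions for $r=1,2,3$; you instead ask for a prime $p'$ in $\bigl(p/P,\,r+\frac{p-r}{P-r}\bigr]$ and observe that this interval swallows a Bertrand interval because $2p/P\leq r+\frac{p-r}{P-r}$ clears to $p(P-2r)\leq rP(P-r-1)$, which holds for sign reasons alone ($P\leq 2r$ makes the left side nonpositive, $P\geq r+1$ makes the right side nonnegative). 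That eliminates both the $\frac{11}{7}$ estimates and essentially all the casework on small $r$; only the degenerate equality at $r=1$ needs a glance, and there one may simply take $p'=p$, recovering the paper's $m=2p$. The one step worth writing out in full is the existence of a prime in the open interval $(p/P,2p/P)$: it follows from $p_{k+1}<2p_k$ applied to the largest prime not exceeding $p/P$, which exists because $p\geq PQ$ gives $p/P\geq Q\geq 3$.
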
 
\begin{proof} 
First, suppose $r<p<(p_{b(r)+1}-r)(p_{b(r)+2}-r)+r$, and let $m$ be an arbitrary element of $B_r$ that is greater than $p$. We will show that $S_r(m)>p-r$. If $\omega(m)\geq 2$, then $\displaystyle{S_r(m)\geq\prod_{i=1}^{\omega(m)}(p_{b(r)+i}-r)\geq(p_{b(r)+1}-r)(p_{b(r)+2}-r)}$ $>p-r$. Therefore, we may assume that $\omega(m)=1$ so that we may write $m=q^{\beta}$ for some prime $q>r$ and positive integer $\beta$. Furthermore, we may assume $\beta>1$ because if $\beta=1$, then $S_r(m)=q-r=m-r>p-r$. If $r\not\in\{1,2,3,5\}$, then it is easy to see, with the help of Lemma \ref{Lem3.1}, that $p_{b(r)+2}<2r$. Thus, if $r\not\in\{1,2,3,5\}$, then we have $S_r(m)=q^{\beta-1}(q-r)\geq q(q-r)\geq p_{b(r)+1}(p_{b(r)+1}-r)>r(p_{b(r)+1}-r)>(p_{b(r)+1}-r)(p_{b(r)+2}-r)>p-r$. If $r=1$, then the inequality $p<(p_{b(r)+1}-r)(p_{b(r)+2}-r)+r$ forces $p=2$, so $S_r(m)=q^{\beta-1}(q-r)>1=p-r$. If $r=2$, then the inequality $r<p<(p_{b(r)+1}-r)(p_{b(r)+2}-r)+r$ forces $p=3$, so $S_r(m)=q^{\beta-1}(q-r)>1=p-r$. If $r=3$, then $q\geq 5$ and either $p=5$ or $p=7$. Therefore, $S_r(m)=q^{\beta-1}(q-r)\geq5(5-3)>p-r$. Finally, if $r=5$, then $p\in\{7,11,13\}$ and $q\geq 7$. Thus, $S_r(m)=q^{\beta-1}(q-r)\geq7(7-5)>p-r$. 
\par 
To prove the converse, suppose $p\geq (p_{b(r)+1}-r)(p_{b(r)+2}-r)+r$. We wish to find some $m\in B_r$ such that $m>p$ and $S_r(m)\leq p-r$. We may assume that $p>p_{b(r)+1}p_{b(r)+2}$ because, otherwise, we may simply set $m=p_{b(r)+1}p_{b(r)+2}$. We know that there exists a unique integer $t\geq b(r)+2$ such that $p_{b(r)+1}p_t<p<p_{b(r)+1}p_{t+1}$. Suppose $r>3$ so that, with the help of Lemma \ref{Lem3.1} and some very short casework, we may conclude that $\displaystyle{p_{b(r)+1}\leq\frac{11}{7}r}$ and $\displaystyle{p_{t+1}\leq\frac{11}{7}p_t}$. Then, setting $m=p_{b(r)+1}p_{t+1}$, we have 
\[S_r(m)=(p_{b(r)+1}-r)(p_{t+1}-r)\leq\frac{4}{7}r\left(\frac{11}{7}p_t-r\right)\] 
\[<\frac{44}{49}p_{b(r)+1}p_t-\frac{4}{7}r^2<p_{b(r)+1}p_t-r<p-r.\]   
We now handle the cases in which $r\leq 3$. If $r=1$, then $p$ is odd, so we may set $m=2p$ to get $S_1(m)=S_1(2)S_1(p)=p-1=p-r$. If $r=2$, then $3\nmid p$, so we may set $m=3p$ to find $S_2(m)=S_2(3)S_2(p)=p-2=p-r$. Finally, if $r=3$, then we have $5p_t<p<5p_{t+1}$. Set $m=5p_{t+1}$. As $\displaystyle{p_{t+1}<\frac{5}{2}p_t}$, we have $2p_{t+1}-3<p$, so $S_3(m)=2(p_{t+1}-3)<p-3=p-r$.  
\end{proof} 
\begin{theorem} \label{Thm3.2} 
Let $r$ be a positive integer, and let $p$ be a prime. Then $p_{b(r)+1}^2\not\in F_r$ and $p^3\not\in F_r$. 
\end{theorem}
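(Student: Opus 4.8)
The plan is to treat the two assertions separately, in each case exhibiting an explicit competitor $m$.

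For $p_{b(r)+1}^2\notin F_r$ I would take $m=p_{b(r)+1}p_{b(r)+2}$. Then $m\in B_r$, $m>p_{b(r)+1}^2$, and $S_r(m)=(p_{b(r)+1}-r)(p_{b(r)+2}-r)$, whereas $S_r(p_{b(r)+1}^2)=p_{b(r)+1}(p_{b(r)+1}-r)$; after dividing by $p_{b(r)+1}-r$, it suffices to verify $p_{b(r)+2}\le p_{b(r)+1}+r$. Since $p_{b(r)+1}\le 2r$ (e.g.\ by Bertrand's postulate), Lemma~\ref{Lem3.1} yields $p_{b(r)+2}-p_{b(r)+1}\le\frac{2}{5}p_{b(r)+1}\le\frac{4}{5}r<r$ as long as $b(r)+1\notin\{1,2,4\}$, i.e.\ $r\notin\{1,2,5,6\}$; the four remaining values of $r$ each satisfy $p_{b(r)+2}\le p_{b(r)+1}+r$ by direct inspection. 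So $m$ witnesses $p_{b(r)+1}^2\notin F_r$.

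For $p^3\notin F_r$ I would split into three cases. If $p\le r$, then $S_r(p^3)=0$, so $p^3\notin B_r$ and hence $p^3\notin F_r$. If $p=p_{b(r)+1}$, then the contrapositive of Lemma~\ref{Lem3.2}, applied with $\alpha=3$ and $\gamma=1$, turns the fact $p_{b(r)+1}^2\notin F_r$ just established into $p_{b(r)+1}^3\notin F_r$. In the remaining case $p\ge p_{b(r)+2}$, so $p>p_{b(r)+1}>r$; let $q$ be the smallest prime exceeding $p^2/p_{b(r)+1}$, and set $m=p\,p_{b(r)+1}\,q$. Because $p>p_{b(r)+1}$ forces $p^2/p_{b(r)+1}>p$, we get $q>p>p_{b(r)+1}>r$, so $p,p_{b(r)+1},q$ are distinct primes each greater than $r$; hence $m\in B_r$ and $S_r(m)=(p-r)(p_{b(r)+1}-r)(q-r)$, while also $m=p\,p_{b(r)+1}\,q>p\,p_{b(r)+1}\cdot\frac{p^2}{p_{b(r)+1}}=p^3$. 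It therefore remains only to prove $(p_{b(r)+1}-r)(q-r)<p^2$, since then $S_r(m)<p^2(p-r)=S_r(p^3)$ and $m$ witnesses $p^3\notin F_r$.

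For that inequality I would invoke the fact that the ratio of two consecutive primes never exceeds $\frac{5}{3}$ (Lemma~\ref{Lem3.1} covers the pairs $p_j,p_{j+1}$ with $j\notin\{1,2,4\}$, and $(2,3),(3,5),(7,11)$ are checked by hand), giving $q\le\frac{5}{3}\cdot\frac{p^2}{p_{b(r)+1}}$; combined with $p_{b(r)+1}\le 2r$, which makes $\frac{p_{b(r)+1}-r}{p_{b(r)+1}}\le\frac{1}{2}$, this yields $(p_{b(r)+1}-r)(q-r)<(p_{b(r)+1}-r)q\le\frac{5}{3}p^2\cdot\frac{p_{b(r)+1}-r}{p_{b(r)+1}}\le\frac{5}{6}p^2<p^2$. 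The step I expect to require the most care is exactly this control of $q$: one must make sure $p^2/p_{b(r)+1}$ is large enough for the consecutive-prime-ratio bound to be quoted cleanly (equivalently, that no unusually large prime gap occurs just above $p^2/p_{b(r)+1}$), which is where Lemma~\ref{Lem3.1} together with a short finite check is needed; everything else is routine arithmetic. Note also that the two assertions must be proved in this order, since the case $p=p_{b(r)+1}$ of the second relies on the first through Lemma~\ref{Lem3.2}.
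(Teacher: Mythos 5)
Your proposal is correct and follows essentially the same route as the paper: the same competitor $p_{b(r)+1}p_{b(r)+2}$ for the first claim (reducing to $p_{b(r)+2}\le p_{b(r)+1}+r$ via Lemma \ref{Lem3.1} and Bertrand), and for $p^3$ the same competitor $p\cdot p_{b(r)+1}\cdot q$ with $q$ the smallest prime making the product exceed $p^3$, together with Lemma \ref{Lem3.2} to dispose of $p=p_{b(r)+1}$. The only differences are cosmetic: the paper argues by contradiction with the Bertrand factor $2$ where you verify the inequality directly with the sharper $5/3$ consecutive-prime ratio, and it invokes Lemma \ref{Lem3.2} up front to force $p>p_{b(r)+1}$ rather than splitting into cases.
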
 
\begin{proof} 
Suppose $p_{b(r)+1}^2\in F_r$. Then, as $p_{b(r)+1}p_{b(r)+2}>p_{b(r)+1}^2$, we must have $(p_{b(r)+1}-r)(p_{b(r)+2}-r)>p_{b(r)+1}(p_{b(r)+1}-r)$. Therefore, $r<p_{b(r)+2}-p_{b(r)+1}$. It is easy to see that this inequality fails to hold for all $r\leq 10$. For $r\geq 11$, we may use Lemma \ref{Lem3.1} to write $p_{b(r)+1}<\sqrt{2}r$ and $p_{b(r)+2}<\sqrt{2}p_{b(r)+1}$. Hence, $p_{b(r)+2}-p_{b(r)+1}<(\sqrt{2}-1)p_{b(r)+1}<(2-\sqrt{2})r<r$, which is a contradiction. 
\par 
Now, suppose $p^3\in F_r$. Then, by Lemma \ref{Lem3.2}, we know that $p^2\in F_r$, so $p>p_{b(r)+1}$. Let $t$ be the unique integer such that $p_{b(r)+1}p_t<p^2<p_{b(r)+1}p_{t+1}$. Then $p^3<p_{b(r)+1}p_{t+1}p$ and $p_{b(r)+1}<p<p_{t+1}$. Therefore, as $p^3\in F_r$, we see that $S_r(p^3)=p^2(p-r)<(p_{b(r)+1}-r)(p_{t+1}-r)(p-r)$, implying that $p^2<(p_{t+1}-r)(p_{b(r)+1}-r)<p_{t+1}(p_{b(r)+1}-r)$. Using Bertrand's Postulate, we see that $p_{t+1}<2p_t$ and $p_{b(r)+1}\leq 2r$. Therefore, $p_{b(r)+1}p_t<p^2<p_{t+1}(p_{b(r)+1}-r)<2p_t(p_{b(r)+1}-r)$, so $2r<p_{b(r)+1}$. This is our desired contradiction. 
\end{proof} 
Combining Lemma \ref{Lem3.2}, Theorem \ref{Thm3.1}, and Theorem \ref{Thm3.2}, we see that any $n\in F_r$ satisfying $n\geq((p_{b(r)+1}-r)(p_{b(r)+2}-r)+r)^2$ must have at least two prime factors. Furthermore, we record the following conjecture about the nonexistence of sparsely Schemmel totient numbers that are squares of primes. 
\begin{conjecture} \label{Conj3.1} 
For any prime $p$ and positive integer $r$, $p^2\not\in F_r$. 
\end{conjecture}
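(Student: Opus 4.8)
We conclude by sketching a possible line of attack on Conjecture~\ref{Conj3.1}. Suppose $p^2\in F_r$. Applying Lemma~\ref{Lem3.2} with $\gamma=1$ and $\alpha=2$ gives $p\in F_r$, and Theorem~\ref{Thm3.2} rules out $p=p_{b(r)+1}$; since $p\in B_r$ forces $p>r$, we are left with $p\geq p_{b(r)+2}$, while Theorem~\ref{Thm3.1} confines $p$ to $r<p<(p_{b(r)+1}-r)(p_{b(r)+2}-r)+r$. Write $a=p_{b(r)+1}$, $b=p_{b(r)+2}$, $\delta=a-r$, and $\epsilon=p-r$. If $\delta=1$ then $(p_{b(r)+1}-r)(p_{b(r)+2}-r)+r=b$, so no prime $p\geq b$ lies in the admissible range and there is nothing to prove; hence we may assume $\delta\geq2$, and for $r\geq2$ the two primes $a<b$ are odd, so $\epsilon\geq b-r=(b-a)+\delta\geq\delta+2$.

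To defeat $p^2$ the idea is to find a prime $q$ for which $m:=aq$ satisfies $m>p^2$ and $S_r(m)\leq S_r(p^2)$. Any such $q$ exceeds $p^2/a>p>a$, so $m\in B_r$, $\gcd(a,q)=1$, and $S_r(m)=(a-r)(q-r)=\delta(q-r)$, whereas $S_r(p^2)=p(p-r)$; thus it suffices to locate a prime $q$ in
\[
I=\left(\frac{p^2}{a},\ r+\frac{p(p-r)}{\delta}\right].
\]
I would show that for all sufficiently large $r$ the right endpoint of $I$ is at least $\tfrac65$ times its left endpoint, so that (as $p^2/a>r$ is large) a sufficiently explicit form of Bertrand's postulate—say Nagura's theorem that $(n,\tfrac65 n)$ contains a prime for $n\geq25$—delivers such a $q$, contradicting $p^2\in F_r$. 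Clearing denominators and writing $a=r+\delta$, $p=r+\epsilon$, the inequality ``right endpoint $\geq\tfrac65\cdot$ left endpoint'' reduces to $rQ\geq\tfrac15p^2\delta$, where $Q=a^2-a(r+p)+p^2=\delta^2+r\epsilon-\delta\epsilon+\epsilon^2$; since $\epsilon>\delta$ we have $Q>r\epsilon$, so it suffices to verify $5r^2\epsilon/\delta\geq(r+\epsilon)^2$. When $\epsilon\leq r$ this is immediate (the left side exceeds $5r^2$ while $(r+\epsilon)^2\leq4r^2$, using $\epsilon/\delta>1$); when $\epsilon>r$ we have $(r+\epsilon)^2<4\epsilon^2$, so it suffices that $\delta\epsilon\leq\tfrac54r^2$.

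This last inequality is the crux, and is where a genuine analytic input is needed. From $p\in F_r$ we get $\epsilon<(a-r)(b-r)=\delta\bigl((b-a)+\delta\bigr)$, and by Bertrand's postulate $\delta$ and $b-a$ are each at most the largest gap $G$ between consecutive primes up to $4r$; hence $\delta\epsilon<2G^3$, and it is enough to know $G=O(r^{2/3})$. Any power-saving bound on prime gaps supplies this—for instance the Baker--Harman--Pintz estimate $G\ll r^{0.525}$—and an explicit such bound yields an effective threshold $R_0$ past which the argument runs. The finitely many $r<R_0$ would then be settled by a direct computation in the spirit of Lemma~\ref{Lem3.1}: for each such $r$ only finitely many primes $p$ satisfy the condition of Theorem~\ref{Thm3.1}, and for each one a witness $m$ (of the form $aq$, or located by a brief search) can be exhibited.

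I expect the real obstacle to be precisely this dependence on a power-saving prime-gap estimate. Elementary tools—Bertrand's postulate, Chebyshev bounds, even Dusart's interval $[x,x+x/(25\log^2x)]$ used in Lemma~\ref{Lem3.1}—bound $G$ only by a quantity of order $x/(\log x)^{O(1)}$, which is far weaker than the $O(r^{2/3})$ required here; the bad configuration is an $r$ lying inside a long prime gap whose successor gap is also long, so that $p$ may be as large as roughly $r$ times the square of a prime gap. Removing the analytic input, or finding a choice of $m$ that is robust to large prime gaps, appears to need a genuinely new idea—presumably the reason the statement is recorded only as a conjecture.
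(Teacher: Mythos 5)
The statement you are addressing is recorded in the paper only as a conjecture; no proof is given there, so there is nothing to compare your argument against. Your submission is, by your own account, a sketch rather than a proof, and I agree with your own diagnosis of where it stops being one. What you do establish is correct and worth noting: the reduction via Lemma~\ref{Lem3.2}, Theorem~\ref{Thm3.1}, and Theorem~\ref{Thm3.2} to the range $p_{b(r)+2}\leq p<(p_{b(r)+1}-r)(p_{b(r)+2}-r)+r$ is sound; the observation that the range is empty when $\delta=p_{b(r)+1}-r=1$ settles the conjecture outright whenever $r+1$ is prime (consistent with Masser and Shiu's remark for $r=1$); and I have checked your algebra reducing ``the interval $I$ has endpoint ratio at least $6/5$'' to $rQ\geq\tfrac15p^2\delta$ with $Q=\delta^2+r\epsilon-\delta\epsilon+\epsilon^2$, and thence to $\delta\epsilon\leq\tfrac54r^2$ in the case $\epsilon>r$ — all of that is right (indeed $Q-r\epsilon=(\delta-\epsilon/2)^2+3\epsilon^2/4>0$ unconditionally, so you do not even need $\epsilon>\delta$ for that step).

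The genuine gap is exactly the one you name, and it is fatal to this being a proof. The bound $\delta\epsilon<2G^3$ requires $G=O(r^{2/3})$ for the maximal prime gap up to roughly $4r$, and no elementary or explicitly effective estimate delivers this: Dusart-type results give only $G\ll r/\log^2r$, the Baker--Harman--Pintz exponent $0.525$ has no published explicit threshold, and even explicit $x^{2/3}$-type gap results (Dudek) kick in only at thresholds like $e^{e^{33.3}}$, far beyond any feasible finite verification of the remaining $r$. So the argument proves the conjecture only for $r+1$ prime, and conditionally (e.g., under RH, which gives $G\ll r^{1/2}\log r$) for all sufficiently large $r$. A second, unexamined limitation is your restriction to witnesses of the form $m=p_{b(r)+1}q$ with exactly two prime factors; in the bad configuration you describe ($r$ inside a long gap followed by another long gap), witnesses with more prime factors suffer from the same scarcity of primes near $r$, so it is not clear that a cleverer choice of $m$ rescues the method — which is presumably why the statement remains a conjecture. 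If you write this up, present it as a conditional/partial result, not as a proof.
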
  
We now proceed to establish asymptotic results concerning the primes that divide and do not divide sparsely Schemmel totient numbers. For a given $r\in\mathbb{N}$ and $n\in F_r$, we will define $P_k(n)$ to be the $k^{th}$ largest prime divisor of $n$ (provided $\omega(n)\geq k$), and we will let $Q_k(n)$ denote the $k^{th}$ smallest prime that is larger than $r$ and does not divide $n$ (the functions $Q_k$ depends on $r$, but this should not lead to confusion because we will work with fixed values of $r$).
We will let $\displaystyle{R(n)=n\prod_{\substack{p\in\mathbb{P} \\ p\vert n}}p^{-1}}$. We will also make use of the Jacobsthal function $J$. For a positive integer $n$, $J(n)$ is defined to be the smallest positive integer $a$ such that every set of $a$ consecutive integers contains an element that is relatively prime to $n$. In particular, for any positive integer $r$, $J(r\#)$ is the largest possible difference between consecutive elements of $B_r$. For convenience, we will write $J_r=J(r\#)$. Finally, we will let $\lambda_k(r)$ be the unique positive real root of the polynomial $\displaystyle{\frac{J_r}{r}x^k+kx-(k-1)}$.
\begin{lemma} \label{Lem3.3} 
If $r$, $n$, and $k$ are positive integers such that $k\geq 2$, $n\in F_r$, and $\omega(n)\geq k$, then $Q_{k-1}(n)>\lambda_k(r)(P_k(n)-r)$. 
\end{lemma}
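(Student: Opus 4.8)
The plan is to argue by contradiction. Suppose $Q_{k-1}(n)\le\lambda_k(r)(P_k(n)-r)$; I will produce an $m\in B_r$ with $m>n$ and $S_r(m)\le S_r(n)$, which is impossible since $n\in F_r$. Write $P_i=P_i(n)$, $Q_i=Q_i(n)$, $P=P_k(n)$, $q=Q_{k-1}(n)$, $\lambda=\lambda_k(r)$, and $y=q/(P-r)$, so that the assumption reads $y\le\lambda$. The competitor will be built from $n$ by deleting one copy of each of the $k$ largest prime divisors $P_1,\dots,P_k$ of $n$, inserting the $k-1$ smallest primes above $r$ that do not divide $n$ (these are exactly $Q_1,\dots,Q_{k-1}$), and multiplying by a carefully chosen auxiliary $c\in B_r$; explicitly, $m=\dfrac{n}{P_1\cdots P_k}\,Q_1\cdots Q_{k-1}\,c$, which is a positive integer lying in $B_r$ for every $c\in B_r$. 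Before going further I would note that $\lambda<(k-1)/k<1$, since $\tfrac{J_r}{r}x^k+kx-(k-1)$ equals $\tfrac{J_r}{r}\bigl(\tfrac{k-1}{k}\bigr)^{k}>0$ at $x=(k-1)/k$ and is strictly increasing on $(0,\infty)$; consequently $q\le\lambda(P-r)<P$, so every inserted prime is strictly smaller than every deleted one, a fact that underlies everything below.

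Next I would do the totient bookkeeping, using $S_r(N)=N\prod_{p\mid N}(1-r/p)$ for $N\in B_r$. Since $m/n=c\,Q_1\cdots Q_{k-1}/(P_1\cdots P_k)$, the requirement $m>n$ is just $c>A$, where $A:=\dfrac{P_1\cdots P_k}{Q_1\cdots Q_{k-1}}$. For $S_r$, cancel the primes common to $m$ and $n$: each $Q_i$ divides $m$ but not $n$, and every prime dividing $n$ but not $m$ lies among $P_1,\dots,P_k$, so $\dfrac{S_r(m)}{S_r(n)}=\dfrac{m}{n}\cdot\dfrac{\prod_{p\mid m}(1-r/p)}{\prod_{p\mid n}(1-r/p)}\le\dfrac{m}{n}\cdot\dfrac{\prod_{i=1}^{k-1}(1-r/Q_i)}{\prod_{i=1}^{k}(1-r/P_i)}=\dfrac{c\,\prod_{i=1}^{k-1}(Q_i-r)}{\prod_{i=1}^{k}(P_i-r)}$, whence $S_r(m)\le S_r(n)$ as soon as $c\le B$, where $B:=\dfrac{\prod_{i=1}^{k}(P_i-r)}{\prod_{i=1}^{k-1}(Q_i-r)}$. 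I would then take $c$ to be the least element of $B_r$ exceeding $A$; since any $J_r$ consecutive integers contain an element of $B_r$, this forces $A<c\le A+J_r$, so it is enough to prove the inequality $B-A\ge J_r$.

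Proving $B-A\ge J_r$ is the heart of the argument. Because every $P_i$ exceeds $P$ and every $Q_i$ is at most $q<P$, an elementary sign analysis of the partial derivatives shows $B-A$ is nondecreasing in each $P_i$ and nonincreasing in each $Q_i$ — in each case the sign is decided by comparing two products with equally many factors, the factors of one dominating the other's termwise, exactly because every $P_i$ exceeds every $Q_j$. Hence $B-A\ge\dfrac{(P-r)^k}{(q-r)^{k-1}}-\dfrac{P^k}{q^{k-1}}$. Now I would put $t=P-r$ and $q=yt$ and expand $\dfrac{t^k}{(yt-r)^{k-1}}=\dfrac{t}{y^{k-1}}\bigl(1-\tfrac{r}{yt}\bigr)^{-(k-1)}$ as a binomial series (convergent, as $r<yt$) and $\dfrac{(t+r)^k}{(yt)^{k-1}}=\dfrac{t}{y^{k-1}}\bigl(1+\tfrac{r}{t}\bigr)^{k}$ as a binomial sum. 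The coefficients of $t$ cancel, and every remaining coefficient — of $t^{1-j}$ for $j\ge 1$ — is positive, using $y<1$ and $\binom{j+k-2}{j}\ge\binom{k}{j}$ for $j\ge2$, and using $y<(k-1)/k$ for $j=1$. So the difference exceeds its $j=1$ term, $\dfrac{(k-1)r}{y^{k}}-\dfrac{kr}{y^{k-1}}=\dfrac{r}{y^{k}}\bigl((k-1)-ky\bigr)$, and $\dfrac{r}{y^{k}}\bigl((k-1)-ky\bigr)\ge J_r$ is equivalent to $\tfrac{J_r}{r}y^{k}+ky-(k-1)\le0$ — which holds because $y\le\lambda_k(r)$ and the polynomial $\tfrac{J_r}{r}x^{k}+kx-(k-1)$ is increasing in $x>0$ with unique positive root $\lambda_k(r)$. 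This yields $B-A\ge J_r$, produces the competitor $m$, and gives the contradiction.

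The main obstacle is exactly the last paragraph: setting up the two monotonicity reductions so that one really lands on $\dfrac{(P-r)^k}{(q-r)^{k-1}}-\dfrac{P^k}{q^{k-1}}$, and then the binomial-coefficient comparison, which must be strong enough to make $B-A\ge J_r$ hold for every admissible $P$ (not just asymptotically in $P$) while reproducing precisely the polynomial $\tfrac{J_r}{r}x^{k}+kx-(k-1)$ defining $\lambda_k(r)$. The remaining steps are routine.
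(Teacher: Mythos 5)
Your proposal is correct. The competitor you build is in fact the very same integer the paper uses: your $m=\dfrac{n}{P_1\cdots P_k}Q_1\cdots Q_{k-1}c$ with $c$ the least element of $B_r$ exceeding $A=\dfrac{P_1\cdots P_k}{Q_1\cdots Q_{k-1}}$ is exactly the paper's $m=\dfrac{\mu N}{M}n$ with $M=\prod P_i$, $N=\prod Q_i$, and $\mu$ the least element of $B_r$ exceeding $M/N$; the bookkeeping for $S_r(m)/m$ is also identical. Where you genuinely diverge is the endgame. The paper bounds $m/n<1+J_rQ_{k-1}^{k-1}/P_k^k$ and $S_r(m)/m$ by their extreme values, multiplies, and then linearizes the resulting product inequality with $1+x<e^x$, $1-x<e^{-x}$, $(1-x)^{-1}<e^{x/(1-x)}$, which after rearrangement hands over the polynomial $\frac{J_r}{r}x^k+kx-(k-1)$ directly. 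You instead prove the cleaner integrality-flavored inequality $B-A\geq J_r$, first reducing to the symmetric extreme case $\frac{(P-r)^k}{(q-r)^{k-1}}-\frac{P^k}{q^{k-1}}$ by the termwise-domination monotonicity argument (valid, since $q\leq\lambda_k(r)(P-r)<P$ keeps every inserted prime below every deleted one throughout the deformation), and then comparing binomial expansions term by term, with the $j=1$ term delivering exactly $\frac{r}{y^k}\bigl((k-1)-ky\bigr)\geq J_r$ and the $j\geq 2$ terms nonnegative via $\binom{j+k-2}{j}\geq\binom{k}{j}$ and $y<1$. I checked these steps and they hold; your route is by contradiction where the paper's is direct, but that is cosmetic. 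What your approach buys is an exact product comparison with no recourse to exponential inequalities; what it costs is the extra care in the monotonicity reduction and the coefficient comparison, which the paper's shorter exponential trick sidesteps. Both land on the same polynomial and hence the same constant $\lambda_k(r)$.
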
 
\begin{proof} 
Write $\displaystyle{M=\prod_{i=1}^kP_i(n)}$ and $\displaystyle{N=\prod_{i=1}^{k-1}Q_i(n)}$. Let $\mu$ be the smallest element of $B_r$ that is greater than $\displaystyle{\frac{M}{N}}$. Because any set of $J_r$ consecutive integers must contain at least one element that is not divisible by any prime less than or equal to $r$, we find that $\displaystyle{\mu<\frac{M}{N}+J_r}$. Let us put $\displaystyle{m=\frac{\mu N}{M}n}$ so that $m\in B_r$ and $\displaystyle{1<\frac{m}{n}<1+\frac{J_rN}{M}<1+J_r\frac{Q_{k-1}(n)^{k-1}}{P_k(n)^k}}$. Because $m$ is divisible by all of the prime divisors of $N$ and all the prime divisors of $n$ except possibly  those that divide $M$, we have
\[\frac{S_r(m)}{m}\leq\prod_{i=1}^{k-1}\left(1-\frac{r}{Q_i(n)}\right)\prod_{j=1}^k\left(1-\frac{r}{P_j(n)}\right)^{-1}\frac{S_r(n)}{n}\]
\[<\left(1-\frac{r}{Q_{k-1}(n)}\right)^{k-1}\left(1-\frac{r}{P_k(n)}\right)^{-k}\frac{S_r(n)}{n}.\]
This implies that 
\[S_r(m)<\left(1+J_r\frac{Q_{k-1}(n)^{k-1}}{P_k(n)^k}\right)\left(1-\frac{r}{Q_{k-1}(n)}\right)^{k-1}\left(1-\frac{r}{P_k(n)}\right)^{-k}S_r(n),\] 
so the fact that $n\in F_r$ implies that 
\begin{equation} \label{Eq3.1} 
\left(1+J_r\frac{Q_{k-1}(n)^{k-1}}{P_k(n)^k}\right)\left(1-\frac{r}{Q_{k-1}(n)}\right)^{k-1}\left(1-\frac{r}{P_k(n)}\right)^{-k}>1. 
\end{equation}  
Write $\displaystyle{x_1=J_r\frac{Q_{k-1}(n)^{k-1}}{P_k(n)^k}}$, $\displaystyle{x_2=\frac{r}{Q_{k-1}(n)}}$, and $\displaystyle{x_3=\frac{r}{P_k(n)}
}$ so that \eqref{Eq3.1} becomes $(1+x_1)(1-x_2)^{k-1}(1-x_3)^{-k}>1$. Because $x_1$ and $x_2$ are positive and $0<x_3<1$, we may invoke the inequalities $1+x_1<e^{x_1}$, $1-x_2<e^{-x_2}$, and $(1-x_3)^{-1}<e^{x_3/(1-x_3)}$ to write 
\begin{equation} \label{Eq3.2} 
e^{x_1-(k-1)x_2+kx_3/(1-x_3)}>1.
\end{equation} 
After a little algebraic manipulation, \eqref{Eq3.2} becomes 
\[\frac{J_r}{r}\left(\frac{Q_{k-1}(n)}{P_k(n)}\right)^k+k\frac{Q_{k-1}(n)}{P_k(n)-r}-(k-1)>0.\] 
Thus, if we write $\displaystyle{A(x)=\frac{J_r}{r}x^k+kx-(k-1)}$, then $\displaystyle{A\left(\frac{Q_{k-1}(n)}{P_k(n)-r}\right)>0}$. This means that $\displaystyle{\frac{Q_{k-1}(n)}{P_k(n)-r}>\lambda_k(r)}$, so we are done.     
\end{proof} 
\begin{lemma} \label{Lem3.4} 
For any positive integers $r$ and $n$ with $\omega(n)\geq 2$ and $n\in F_r$, \[P_1(n)<Q_1(n)\left(1-J_r+\frac{J_r}{r}Q_1(n)\right).\]   
\end{lemma}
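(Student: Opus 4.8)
The plan is to adapt the construction from the proof of Lemma~\ref{Lem3.3}. The naive ``$k=1$'' specialization of that lemma merely replaces the largest prime divisor of $n$ by a slightly larger element of $B_r$, which always increases $S_r$ and so yields nothing; the fix is to instead build a competitor $m$ by deleting the prime $P_1(n)$ from $n$, adjoining the prime $Q_1(n)$, and multiplying by a small correction factor that forces $m>n$. Throughout, write $p=P_1(n)$ and $q=Q_1(n)$, and note that $p,q>r$ and $p\neq q$ (the latter since $q\nmid n$).

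First I would let $\mu$ be the smallest element of $B_r$ exceeding $p/q$ (so $\mu=1$ when $p<q$). Exactly as in the proof of Lemma~\ref{Lem3.3}, the defining property of $J_r$---every block of $J_r$ consecutive integers meets $B_r$---gives $\mu<p/q+J_r$. Put $m=\mu q n/p$. Since $p\mid n$ this is a positive integer, and since $\mu\in B_r$, $q>r$, and every prime factor of $n$ exceeds $r$, we have $m\in B_r$; moreover $m/n=\mu q/p>1$, so $m>n$, while $\mu<p/q+J_r$ gives $m/n<1+J_rq/p$.

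Next I would bound $S_r(m)/m$. Every prime divisor of $n$ other than $p$ still divides $m$, and $q$ divides $m$ but not $n$, so the prime divisors of $m$ include the disjoint set $\{q\}\cup(\{\ell\in\mathbb{P}:\ell\mid n\}\setminus\{p\})$; since each factor $1-r/\ell$ lies in $(0,1)$, adjoining further prime divisors only shrinks the product, whence
\[
\frac{S_r(m)}{m}\le\Bigl(1-\frac rq\Bigr)\Bigl(1-\frac rp\Bigr)^{-1}\frac{S_r(n)}{n}.
\]
Multiplying by $m$ and using $m/n<1+J_rq/p$ gives $S_r(m)<\bigl(1+J_rq/p\bigr)\bigl(1-r/q\bigr)\bigl(1-r/p\bigr)^{-1}S_r(n)$. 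But $m>n$ and $n\in F_r$ force $S_r(m)>S_r(n)$, so
\[
\Bigl(1+\frac{J_rq}{p}\Bigr)\Bigl(1-\frac rq\Bigr)\Bigl(1-\frac rp\Bigr)^{-1}>1 ,
\]
and clearing denominators and rearranging (a short computation) turns this into $p<q\bigl(1-J_r+\tfrac{J_r}{r}q\bigr)$, which is the claim.

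I expect the only genuine obstacle to be spotting the right construction at the outset: one must recognize that the direct analogue of Lemma~\ref{Lem3.3} with $k=1$ is vacuous and that the productive move is to trade $P_1(n)$ for $Q_1(n)$ rather than for something larger. Once that is in place, verifying $m\in B_r$, tracking the prime divisors of $m$ to obtain the displayed bound on $S_r(m)/m$, and carrying out the final algebra are all routine. (The hypothesis $\omega(n)\ge2$ appears not to be needed in this argument, but it does no harm.)
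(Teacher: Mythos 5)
Your construction is exactly the paper's: take $\mu$ the least element of $B_r$ exceeding $P_1(n)/Q_1(n)$, set $m=\mu Q_1(n)n/P_1(n)$, bound $S_r(m)/m$ by $(1-r/Q_1(n))(1-r/P_1(n))^{-1}S_r(n)/n$, and use $S_r(m)>S_r(n)$; the only cosmetic difference is that you rearrange the resulting strict inequality directly, whereas the paper argues by contradiction from $P_1(n)\geq Q_1(n)(1-J_r+\frac{J_r}{r}Q_1(n))$. The algebra in your final step checks out, so the proposal is correct and essentially identical to the paper's proof.
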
  
\begin{proof} 
Fix $r$ and $n$, and write $P=P_1(n)$ and $Q=Q_1(n)$. Suppose, for the sake of finding a contradiction, that \[P\geq Q\left(1-J_r+\frac{J_r}{r}Q\right).\] 
Let $\mu$ be the smallest element of $B_r$ that is greater than $\displaystyle{\frac{P}{Q}}$. Then $\displaystyle{\mu<\frac{P}{Q}+J_r}$. Write $\displaystyle{m=\frac{Q\mu}{P}n}$ so that $m\in B_r$ and $\displaystyle{1<\frac{m}{n}<1+\frac{J_rQ}{P}\leq 1+\frac{J_r}{1-J_r+\frac{J_r}{r}Q}}$. Because $m$ is divisible by $Q$ and all the prime divisors of $n$ except possibly $P$, we have 
\[\frac{S_r(m)}{m}\leq\left(1-\frac{r}{Q}\right)\left(1-\frac{r}{P}\right)^{-1}\frac{S_r(n)}{n}.\] 
Therefore, 
\[S_r(m)<\left(1-\frac{r}{Q}\right)\left(1-\frac{r}{P}\right)^{-1}\left(1+\frac{J_r}{1-J_r+\frac{J_r}{r}Q}\right)S_r(n)\] 
\[\leq\left(1-\frac{r}{Q}\right)\left(1-\frac{r}{Q\left(1-J_r+\frac{J_r}{r}Q\right)}\right)^{-1}\left(1+\frac{J_r}{1-J_r+\frac{J_r}{r}Q}\right)S_r(n)\]
\[=\left(1-\frac{r}{Q}\right)\left(1-J_r+\frac{J_r}{r}Q-\frac{r}{Q}\right)^{-1}\left(1+\frac{J_r}{r}Q\right)S_r(n)=S_r(n).\]
This is our desired contradiction, so the proof is complete. 
\end{proof} 
\begin{lemma} \label{Lem3.5} 
Let $r$ be a positive integer, and let $n\in F_r$. Then 
\[R(n)<\frac{J_r}{r}Q_1(n)(Q_1(n)-r).\]   
\end{lemma}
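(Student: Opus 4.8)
The plan is to argue by contradiction, producing from a failure of the inequality an integer $m\in B_r$ with $m>n$ and $S_r(m)\le S_r(n)$, which is impossible since $n\in F_r$. Write $Q=Q_1(n)$, abbreviate $R=R(n)$, and note $Q>r$ so that $Q-r\ge 1$. Suppose, for contradiction, that $R\ge\frac{J_r}{r}Q(Q-r)$; equivalently $J_r\le\frac{Rr}{Q(Q-r)}$. First I would pin down a suitable ``correction factor'' $\mu$: among the $J_r$ consecutive integers $\lfloor R/Q\rfloor+1,\ldots,\lfloor R/Q\rfloor+J_r$, at least one lies in $B_r$ by the defining property of $J_r$, and taking $\mu$ to be such an integer we get $\mu>R/Q$, while
\[\mu\le\frac RQ+J_r\le\frac RQ+\frac{Rr}{Q(Q-r)}=\frac{R}{Q-r},\]
so $\frac RQ<\mu\le\frac{R}{Q-r}$ with $\mu\in B_r$.

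Next I would set $m=\frac{Q\mu}{R}\,n$. Since $n/R=\prod_{p\mid n}p$ is an integer, $m$ is a positive integer, and every prime dividing it---the primes of $n$, the primes of $\mu$, and $Q$---exceeds $r$, so $m\in B_r$; moreover $m>n$ because $Q\mu>R$.

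The crucial step is the prime bookkeeping. The set of primes dividing $m$ contains $Q$ and every prime dividing $n$, and $Q\nmid n$ by definition of $Q_1(n)$. Since $S_r(m)/m=\prod_{p\mid m}(1-r/p)$ with every factor in $(0,1)$, enlarging the set of primes only shrinks the product, so
\[\frac{S_r(m)}{m}\le\left(1-\frac rQ\right)\prod_{p\mid n}\left(1-\frac rp\right)=\left(1-\frac rQ\right)\frac{S_r(n)}{n}.\]
Multiplying through by $m=\frac{Q\mu}{R}n$ gives $S_r(m)\le\frac{(Q-r)\mu}{R}\,S_r(n)\le S_r(n)$, the last inequality using $\mu\le R/(Q-r)$. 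This contradicts $n\in F_r$, so $R(n)<\frac{J_r}{r}Q_1(n)(Q_1(n)-r)$.

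The one place requiring care is the prime-factorization bookkeeping for $m$: dividing $n$ by $R(n)$ strips each prime of $n$ down to exponent exactly one, so none of those primes is lost, and any extra primes that $\mu$ may introduce can only decrease $S_r(m)/m$, never increase it. Beyond that, this is the same Jacobsthal-interval construction and use of the minimality built into membership in $F_r$ that already appear in Lemmas \ref{Lem3.3} and \ref{Lem3.4}.
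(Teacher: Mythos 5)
Your proposal is correct and follows essentially the same argument as the paper: the same Jacobsthal-interval choice of $\mu$, the same $m=\frac{Q\mu}{R}n$, and the same multiplicativity estimate, with the contradiction $S_r(m)\le S_r(n)$ (non-strict, which still violates Definition \ref{Def1.1}) in place of the paper's strict one. Your extra care in verifying that $m$ is an integer and that the prime bookkeeping works is a welcome addition but not a different method.
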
 
\begin{proof} 
Fix $r$, and $n$, and write $Q=Q_1(n)$ and $R=R(n)$. Suppose \\ 
$\displaystyle{R\geq\frac{J_r}{r}Q(Q-r)}$. Let $\mu$ be the smallest element of $B_r$ greater than $\displaystyle{\frac{R}{Q}}$. Then $\displaystyle{\mu<\frac{R}{Q}+J_r}$. If we put $\displaystyle{m=\frac{Q\mu}{R}n}$, then $m\in B_r$ and 
\[1<\frac{m}{n}<1+\frac{J_rQ}{R}\leq 1+\frac{r}{Q-r}.\] 
Because $m$ is divisible by $Q$ and all the prime divisors of $n$, we have \\ 
$\displaystyle{\frac{S_r(m)}{m}\leq\left(1-\frac{r}{Q}\right)\frac{S_r(n)}{n}}$. This implies that 
\[S_r(m)<\left(1-\frac{r}{Q}\right)\left(1+\frac{r}{Q-r}\right)S_r(n)=S_r(n),\] 
which is a contradiction.   
\end{proof} 
\begin{corollary} \label{Cor3.1} 
Let $r$ be a positive integer. 
Then, for $n\in F_r$,
\[\lim_{n\rightarrow\infty}\omega(n)=\infty.\]  
\end{corollary}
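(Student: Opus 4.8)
Proof idea. The plan is to show that for every fixed positive integer $M$, the set $\{n\in F_r\colon\omega(n)\leq M\}$ is bounded above by a constant depending only on $r$ and $M$. Since $F_r$ is infinite by Remark \ref{Rem2.1}, this is exactly the assertion that $\omega(n)\to\infty$ as $n\to\infty$ through $F_r$. So fix $M$ and take $n\in F_r$ with $\omega(n)\leq M$. The crucial first observation is that $Q_1(n)$ is bounded in terms of $r$ and $M$ alone: among the $M+1$ primes $p_{b(r)+1},p_{b(r)+2},\ldots,p_{b(r)+M+1}$, each of which exceeds $r$, at most $\omega(n)\leq M$ can divide $n$, so at least one does not; hence $Q_1(n)\leq p_{b(r)+M+1}$.

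Next I would dispose of the case $\omega(n)\leq 1$. If $n$ is $1$ or a prime power, then by Lemma \ref{Lem3.2}, Theorem \ref{Thm3.1}, and Theorem \ref{Thm3.2} (as recorded in the paragraph preceding this corollary) we have $n<\big((p_{b(r)+1}-r)(p_{b(r)+2}-r)+r\big)^2$, so such $n$ are bounded in terms of $r$. We may therefore assume $\omega(n)\geq 2$, which permits the use of Lemma \ref{Lem3.4}. Since $Q_1(n)>r$, the quantity $1-J_r+\frac{J_r}{r}Q_1(n)$ is positive, and the function $Q\mapsto Q\big(1-J_r+\frac{J_r}{r}Q\big)$ is increasing for $Q\geq r$, so Lemma \ref{Lem3.4} gives
\[P_1(n)<Q_1(n)\left(1-J_r+\frac{J_r}{r}Q_1(n)\right)\leq p_{b(r)+M+1}\left(1-J_r+\frac{J_r}{r}p_{b(r)+M+1}\right),\]
a bound depending only on $r$ and $M$. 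Likewise, Lemma \ref{Lem3.5} gives $R(n)<\frac{J_r}{r}Q_1(n)(Q_1(n)-r)\leq\frac{J_r}{r}p_{b(r)+M+1}(p_{b(r)+M+1}-r)$, again bounded in terms of $r$ and $M$.

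Finally, I would combine these bounds through the factorization $n=R(n)\prod_{p\mid n}p$. Since $n$ has at most $M$ distinct prime factors, each at most $P_1(n)$, we have $\prod_{p\mid n}p\leq P_1(n)^{M}$, so $n\leq R(n)\,P_1(n)^{M}$, which is bounded by a constant depending only on $r$ and $M$. This proves that $\{n\in F_r\colon\omega(n)\leq M\}$ is finite, and the corollary follows. The only slightly delicate points are the edge case $\omega(n)\leq 1$, which is handled by the earlier prime-power results, and confirming that $1-J_r+\frac{J_r}{r}Q_1(n)>0$ so that Lemma \ref{Lem3.4} really does furnish an upper bound for $P_1(n)$; neither is a genuine obstacle.
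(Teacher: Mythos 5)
Your proposal is correct and follows essentially the same route as the paper: bound $Q_1(n)$ by pigeonhole among the first few primes exceeding $r$, then bound $P_1(n)$ via Lemma \ref{Lem3.4} and $R(n)$ via Lemma \ref{Lem3.5}, and conclude that $n$ itself is bounded. Your explicit treatment of the $\omega(n)\leq 1$ case and of the positivity of $1-J_r+\frac{J_r}{r}Q_1(n)$ is slightly more careful than the paper's contradiction-style writeup, but the argument is the same.
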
 
\begin{proof} 
Suppose otherwise. Then there exists some positive integer $\Omega$ such that there are arbitrarily large values of $n\in F_r$ satisfying $\omega(n)<\Omega$. This implies that there are arbitrarily large values of $n\in F_r$ satisfying $Q_1(n)\leq p_{b(r)+\Omega}$. By Lemma \ref{Lem3.4}, this implies that there exists some integer $N$ such that there are arbitrarily large values of $n\in F_r$ satisfying $P_1(n)\leq N$ and $Q_1(n)\leq N$. However, if $P_1(n)\leq N$, then $\displaystyle{R(n)\geq \frac{n}{N\#}}$. Using Lemma \ref{Lem3.5}, we see that $\displaystyle{\frac{n}{N\#}\leq R(n)<\frac{J_r}{r}Q_1(n)(Q_1(n)-r)\leq\frac{J_r}{r}N(N-r)}$, which is a contradiction because $n$ can be arbitrarily large.  
\end{proof}
\begin{corollary} \label{Cor3.2} 
Let $r$ be a positive integer. For sufficiently large $n\in F_r$, $P_1(n)^4\nmid n$.
\end{corollary}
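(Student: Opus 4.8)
The plan is to prove the statement in the strong form that only finitely many $n\in F_r$ can satisfy $P_1(n)^4\mid n$; the assertion then follows immediately. So I would fix an $n\in F_r$ with $P_1(n)^4\mid n$, write $P=P_1(n)$ and $Q=Q_1(n)$, and try to bound $n$ above by a quantity depending only on $r$.

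First I would note that $\upsilon_P(n)\geq 4$ forces the exponent of $P$ in $R(n)=n\prod_{p\mid n}p^{-1}$ to be at least $3$, so $R(n)\geq P^3$. Combining this with Lemma \ref{Lem3.5}, which gives $R(n)<\frac{J_r}{r}Q(Q-r)<\frac{J_r}{r}Q^2$, I obtain $P^3<\frac{J_r}{r}Q^2$. The next step is the elementary observation that $Q<2P$: if some prime in $(r,P]$ fails to divide $n$, then $Q\leq P<2P$ by the definition of $Q$; otherwise every prime in $(r,P]$ divides $n$, so $Q$ must be the least prime exceeding $P=P_1(n)$ (no prime larger than $P_1(n)$ divides $n$), and Bertrand's Postulate supplies a prime in $(P,2P)$, giving $Q<2P$ again. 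Substituting $Q<2P$ into the previous inequality yields $P^3<\frac{4J_r}{r}P^2$, hence $P<\frac{4J_r}{r}$, an absolute ceiling for the largest prime factor of $n$. Consequently $\prod_{p\mid n}p$ is at most a constant $K(r)$ (one may take $K(r)=\lceil 4J_r/r\rceil\#$), and since $n=R(n)\prod_{p\mid n}p$, I conclude $n<\frac{4J_r}{r}P^2\,K(r)<\frac{4J_r}{r}\Bigl(\frac{4J_r}{r}\Bigr)^2 K(r)$, a bound depending only on $r$.

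This bounds every $n\in F_r$ with $P_1(n)^4\mid n$, so there are only finitely many such $n$, and therefore $P_1(n)^4\nmid n$ for all sufficiently large $n\in F_r$. I do not anticipate a genuine obstacle: the whole argument is this short, and the only point requiring a moment's care is the inequality $Q_1(n)<2P_1(n)$, whose role is to upgrade the two-variable bound $R(n)<\frac{J_r}{r}Q_1(n)^2$ coming from Lemma \ref{Lem3.5} (in which both sides could a priori be large) into a numerical bound on $P_1(n)$, and thence on $n$ itself. If one wished to avoid Bertrand's Postulate, Lemma \ref{Lem3.1} would serve just as well.
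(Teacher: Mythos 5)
Your proof is correct and follows essentially the same route as the paper: both combine $R(n)\geq P_1(n)^3$ (from $P_1(n)^4\mid n$) with Lemma \ref{Lem3.5} and Bertrand's Postulate to force $P_1(n)<4J_r/r$. The only divergence is the finish: the paper invokes Corollary \ref{Cor3.1} (so that $\omega(n)\to\infty$ forces $P_1(n)\to\infty$, contradicting the bound for large $n$), whereas you convert the bound on $P_1(n)$ into an explicit bound on $n$ itself via $n=R(n)\prod_{p\mid n}p$, which is marginally more self-contained and yields an effective constant; both conclusions are equally valid.
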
 
\begin{proof} 
For any integer $n>1$, write $\upsilon_{P_1(n)}(n)=\eta(n)$. Using Lemma \ref{Lem3.5}, we see that, for any $n\in F_r$ satisfying $n>1$,  
\[P_1(n)^{\eta(n)-1}\leq R(n)<\frac{J_r}{r}Q_1(n)(Q_1(n)-r).\] Because $Q_1(n)$ is at most the smallest prime exceeding $P_1(n)$, we may use Bertrand's Postulate to write 
\[\frac{J_r}{r}Q_1(n)(Q_1(n)-r)\leq2\frac{J_r}{r}P_1(n)(2P_1(n)-r)<4\frac{J_r}{r}P_1(n)^2.\] 
If $P_1(n)^4\vert n$, then $\eta(n)-1\geq 3$, so $\displaystyle{P_1(n)<4\frac{J_r}{r}}$. By Corollary \ref{Cor3.1}, we see that this is impossible for sufficiently large $n$. 
\end{proof} 
Masser and Shiu show that $P_1(n)^3\nmid n$ for all sparsely totient numbers $n$, but their methods are not obviously generalizable \cite{Masser86}. Thus, we make the following conjecture. 
\begin{conjecture} \label{Conj3.2} 
For any positive integers $r$ and $n$ with $n\in F_r$ and $n>1$, $P_1(n)^3\nmid n$. 
\end{conjecture}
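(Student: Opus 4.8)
The plan is to argue by contradiction. Suppose $n\in F_r$, $n>1$, and $p^3\mid n$ with $p=P_1(n)$. Writing $n=p^{a}c$ with $p\nmid c$ and $a\geq 3$, repeated application of Lemma \ref{Lem3.2} gives $p^3c\in F_r$, and $P_1(p^3c)=p$ because every prime factor of $c$ is less than $p$; so it suffices to rule out elements of $F_r$ of the form $p^3c$ with $p\nmid c$ and $P_1(p^3c)=p$. Assuming $n=p^3c$ is such an element, I would try to produce an $m\in B_r$ with $m>n$ and $S_r(m)\leq S_r(n)$, contradicting $n\in F_r$.

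The natural competitor replaces the block $p^3$ by a product of primes not dividing $n$. Choose distinct primes $q_1<\cdots<q_k$, each larger than $r$ and none dividing $n$ --- for instance among $Q_1(n),Q_2(n),\dots$, or simply the smallest primes exceeding $p$ --- and set $m=c\,q_1\cdots q_k$. Then $m\in B_r$; $m>n$ exactly when $\prod q_i>p^3$; and, since $S_r(n)=p^2(p-r)S_r(c)$ while $S_r(m)=S_r(c)\prod(q_i-r)$, we have $S_r(m)\leq S_r(n)$ exactly when $\prod(q_i-r)\leq p^2(p-r)$. The whole problem thus reduces to producing such primes, and Lemma \ref{Lem2.1} --- possibly combined with a Masser--Shiu-style rearrangement of the smaller prime factors of $n$, which is available because $\omega(n)\to\infty$ by Corollary \ref{Cor3.1} --- is exactly the tool for packaging the resulting comparison.

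The main obstacle, and the reason this remains conjectural, is bounding $\prod(q_i-r)$ against $p^2(p-r)=p^3(1-r/p)$. Writing $\prod(q_i-r)=\prod q_i\cdot\prod(1-r/q_i)$, one can push $\prod q_i$ down to $p^3(1+\varepsilon)$ only for an $\varepsilon$ dictated by the multiplicative gaps among the available primes, whereas the discount $\prod(1-r/q_i)$ saves only a factor $1-O\!\bigl(r\sum 1/q_i\bigr)$; success needs $r\sum 1/q_i\gtrsim\varepsilon$. But Lemma \ref{Lem3.4} forces $Q_1(n)$ to have order $\sqrt{rp/J_r}$, so every usable $q_i$ is $\gg\sqrt p$, only $O(1)$ of them are needed to exceed $p^3$, and therefore $r\sum 1/q_i=O(r/\sqrt p)$ --- smaller than the relative prime gaps near $\sqrt p$ (indeed near $p$ as well) that any current or even conjectural estimate provides. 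So the crude replacement can leave $\prod(q_i-r)$ as large as $\sim p^4$. Masser and Shiu escape this for $r=1$ by exploiting that $r+1=2$ is prime, so $p=2$ admits the exact replacement $2^2\mapsto 5$, together with the parity of $p^2+1$ for odd $p$; neither device has an analogue for general $r$.

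Every route I can see runs into this same barrier, so a genuinely new ingredient is needed. The most plausible candidates are: (i) a sharper structural description of elements of $F_r$ --- say as primorials times a tightly controlled correction, refining Lemmas \ref{Lem3.3}--\ref{Lem3.5} --- from which $p^3\mid n$ would be impossible once $p$ exceeds an explicit $r$-dependent bound; (ii) an improvement of Lemma \ref{Lem3.5} to $R(n)\ll_r Q_1(n)^{2-\delta}$ for some $\delta>0$, which together with $R(n)\geq p^2$ (forced by $p^3\mid n$) and $Q_1(n)<2p$ (Bertrand's postulate, as used in Corollary \ref{Cor3.2}) would be contradictory for large $p$; or (iii) a global rearrangement of the factorization of $n$, in the spirit of Theorem \ref{Thm2.1}, that never inserts a prime just above $p$. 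In each case the finitely many surviving $n$ would be dispatched by computation as in Lemma \ref{Lem3.1}, the one subtlety being that the threshold must be effective in both $p$ and $r$, since no single finite check covers every $r$. I expect this step --- improving the structural lemmas on the prime divisors of sparsely Schemmel totient numbers --- to be the real crux.
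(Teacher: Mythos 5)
This statement is Conjecture \ref{Conj3.2}; the paper does not prove it, and explicitly records that Masser and Shiu's argument for $r=1$ ``is not obviously generalizable.'' Your proposal, appropriately, does not claim a proof either: it gives a correct reduction and then an honest diagnosis of why the obvious attack fails. The reduction is fine --- iterating Lemma \ref{Lem3.2} brings any counterexample $n=p^{a}c$ (with $p=P_1(n)$, $p\nmid c$, $a\geq 3$) down to $p^{3}c\in F_r$, and the competitor $m=c\,q_1\cdots q_k$ with $\prod q_i>p^3$ and $\prod(q_i-r)\leq p^2(p-r)$ is the right comparison to aim for. Your quantitative obstruction is also essentially correct: Lemma \ref{Lem3.4} forces $Q_1(n)\gg_r\sqrt{p}$, so only $O(1)$ replacement primes are available and the discount $\prod(1-r/q_i)$ is $1-O_r(1/\sqrt{p})$, which is far smaller than the relative prime gaps any known or conjectured estimate supplies near $\sqrt{p}$ or near $p$. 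This is consistent with the paper's own behavior: the paper only proves the weaker Corollary \ref{Cor3.2} ($P_1(n)^4\nmid n$ for large $n$), precisely because the fourth-power hypothesis gives $R(n)\geq P_1(n)^3$, which beats the $Q_1(n)^2\ll P_1(n)^2$ bound of Lemma \ref{Lem3.5}, whereas a cube gives only $R(n)\geq P_1(n)^2$ and the comparison becomes too tight.

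So there is no ``gap'' to point to in the usual sense --- the statement is open, the paper leaves it open, and you have correctly identified it as open rather than manufacturing a fake proof. Two small caveats on your write-up. First, your account of how Masser and Shiu handle $r=1$ is a plausible reconstruction but should not be stated as fact without checking their paper; the relevant point for this paper is only that whatever they do uses $r+1=2$ being prime. Second, note that your reduction to the $p^3c$ case does not combine with Theorem \ref{Thm3.1} or \ref{Thm3.2} to dispose of the pure prime-power case in general: $p^2\in F_r$ is itself only conjecturally excluded (Conjecture \ref{Conj3.1}), so even $n=p^3$ is not currently ruled out except via Theorem \ref{Thm3.2}, which does cover $p^3$ but not $p^3c$ with $c>1$.
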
  
For small values of $r$, we may effortlessly make small amounts of progress toward Conjecture \ref{Conj3.2}. For example, it is easy to use Lemma \ref{Lem3.5} to show that $P_1(n)^4\nmid n$ for all $n\in F_2$.  Indeed, if $P_1(n)^4\vert n$ for some $n\in F_2$, then $\displaystyle{P_1(n)<4\cdot\frac{J_2}{2}=4}$. This forces $n$ to be a power of $3$, but Theorem \ref{Thm3.2} tells us that there are no powers of $3$ in $F_2$ except $3$ itself.  
\par 
We are finally ready to establish our promised asymptotic results. 
\begin{theorem} \label{Thm3.3} 
Let $r$, $K$, and $L$ be positive integers with $K\geq 2$. For $n\in F_r$, we have 
\begin{enumerate}[(a)] 
\item $\displaystyle{\limsup_{n\rightarrow\infty}\frac{P_1(n)}{\log n}\geq 2}$,
\item $\displaystyle{\limsup_{n\rightarrow\infty}\frac{Q_L(n)}{\log n}=1}$,
\item $\displaystyle{\limsup_{n\rightarrow\infty}\frac{P_K(n)}{\log n}\leq \lambda_K(r)^{-1}}$,  
\item $\displaystyle{\limsup_{n\rightarrow\infty}\frac{P_1(n)}{\log^2 n}\leq\frac{J_r}{r}}$.  
\end{enumerate}  
\end{theorem}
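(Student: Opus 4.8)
The plan is to establish the four parts in turn. The upper bounds in (b), (c), and (d) all rest on a single estimate, while the lower bounds in (a) and (b) come from the construction in Theorem~\ref{Thm2.1}. Throughout, $n$ ranges over $F_r$ and tends to infinity, and I write $\theta(x)=\sum_{p\leq x}\log p$, so that the Prime Number Theorem supplies $\theta(x)\sim x$ and $p_{j+1}/p_j\to 1$. The key preliminary fact is that, for each fixed positive integer $L$,
\[Q_L(n)\leq(1+o(1))\log n.\]
To see this, one first notes that $Q_1(n)\to\infty$: were $Q_1(n)$ bounded along a subsequence, then Lemma~\ref{Lem3.4} would bound $P_1(n)$ there, forcing $R(n)\geq n/(P_1(n)\#)$ to be unbounded and contradicting Lemma~\ref{Lem3.5} --- this is exactly the argument already used in the proof of Corollary~\ref{Cor3.1}. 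Hence $Q_L(n)\to\infty$ for every fixed $L$. Next, every prime in $(r,Q_L(n))$ other than $Q_1(n),\dots,Q_{L-1}(n)$ divides $n$, so
\[n\geq\frac{\prod_{r<p<Q_L(n)}p}{\prod_{i=1}^{L-1}Q_i(n)}\geq\frac{\prod_{r<p<Q_L(n)}p}{Q_L(n)^{L-1}},\]
and taking logarithms gives $\log n\geq\theta(Q_L(n))-\theta(r)-L\log Q_L(n)=(1+o(1))Q_L(n)$, which is the claim.

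Parts (b) and (c) then follow quickly. The displayed estimate immediately gives $\limsup_{n\to\infty}Q_L(n)/\log n\leq 1$. For the matching lower bound I would apply Theorem~\ref{Thm2.1} with $d=1$ and $\ell=0$; its hypotheses are readily checked for every $k\geq b(r)+2$, so $n_k:=\prod_{i=b(r)+1}^{k}p_i\in F_r$. Since $Q_L(n_k)=p_{k+L}$ and $\log n_k=\theta(p_k)-\theta(r)$, and since $p_{k+L}\sim p_k\sim\theta(p_k)$, we get $Q_L(n_k)/\log n_k\to 1$, proving part (b). For part (c), Corollary~\ref{Cor3.1} gives $\omega(n)\geq K$ for all large $n$, so Lemma~\ref{Lem3.3} with $k=K$ yields $Q_{K-1}(n)>\lambda_K(r)(P_K(n)-r)$, that is, $P_K(n)<r+\lambda_K(r)^{-1}Q_{K-1}(n)$; combining this with $Q_{K-1}(n)\leq(1+o(1))\log n$ gives $P_K(n)\leq(\lambda_K(r)^{-1}+o(1))\log n$, which is part (c).

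For part (d), Lemma~\ref{Lem3.4} gives $P_1(n)<Q_1(n)\bigl(1-J_r+\frac{J_r}{r}Q_1(n)\bigr)$; since $Q_1(n)(1-J_r)=O(Q_1(n))=o(\log^2 n)$ while $\frac{J_r}{r}Q_1(n)^2\leq\frac{J_r}{r}(1+o(1))\log^2 n$ by the preliminary estimate with $L=1$, we obtain $P_1(n)\leq(\frac{J_r}{r}+o(1))\log^2 n$. For part (a), I would fix a large $k$ and take $p_{k+\ell}$ to be the largest prime less than $2p_k-r$; because $\theta(x)\sim x$, the largest prime below any bound $y$ is asymptotic to $y$, so $p_{k+\ell}\sim 2p_k$ and in particular $p_{k+\ell}\geq p_k$, i.e. $\ell\geq 0$. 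Then Theorem~\ref{Thm2.1} with $d=1$ applies --- the inequality $d(p_{k+\ell}-r)<(d+1)(p_k-r)$ is precisely $p_{k+\ell}<2p_k-r$ --- so $n=p_{k+\ell}\prod_{i=b(r)+1}^{k-1}p_i\in F_r$, with $P_1(n)=p_{k+\ell}$ and $\log n=\log p_{k+\ell}+\theta(p_{k-1})-\theta(r)$. As $k\to\infty$, $\log p_{k+\ell}=o(p_k)$ and $\theta(p_{k-1})\sim p_{k-1}\sim p_k$, so $P_1(n)/\log n\to 2$, giving $\limsup_{n\to\infty}P_1(n)/\log n\geq 2$.

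Once Theorem~\ref{Thm2.1} and Lemmas~\ref{Lem3.3}--\ref{Lem3.5} are in hand, every step is short, and the only place that demands genuine care is the preliminary bound $Q_L(n)\leq(1+o(1))\log n$: one must first establish $Q_L(n)\to\infty$ before invoking $\theta(Q_L(n))\sim Q_L(n)$, and then verify that the corrections coming from the omitted primes $Q_1(n),\dots,Q_{L-1}(n)$ and from $\theta(r)$ really are $o(Q_L(n))$. The constructions behind (a) and (b) also hinge on exhibiting a prime near $2p_k$ (respectively near $p_k$), which follows from $\theta(x)\sim x$; the Dusart-type estimate behind Lemma~\ref{Lem3.1} would serve equally well.
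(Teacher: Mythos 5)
Your proof is correct and follows essentially the same route as the paper: Theorem \ref{Thm2.1} with $d=1$ supplies the extremal constructions for (a) and for the lower bound in (b), while Lemmas \ref{Lem3.3} and \ref{Lem3.4} combined with the estimate $Q_L(n)\leq(1+o(1))\log n$ give (c) and (d). The only cosmetic difference is that you obtain that estimate by dividing the missing primes $Q_1(n),\dots,Q_{L-1}(n)$ out of the product of primes below $Q_L(n)$, whereas the paper brackets $n$ between consecutive products $\prod_{i=b(r)+1}^{k}p_i$; both reduce to the Prime Number Theorem in the form $\theta(x)\sim x$.
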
 
\begin{proof} 
To prove $(a)$, let us begin by choosing some integer $k\geq b(r)+2$. Let $\ell(k)$ be the largest integer such that $p_{k+\ell(k)}<2p_k-r$. Setting
$\displaystyle{n(k)=p_{k+\ell(k)}\prod_{i=b(r)+1}^{k-1}p_i}$,  
we see, by Theorem \ref{Thm2.1}, that $n(k)\in F_r$. Furthermore, as $\displaystyle{\prod_{i=b(r)+1}^kp_i\leq n(k)<\prod_{i=b(r)+1}^{k+1}p_i}$, the Prime Number Theorem tells us that $p_k\sim\log n(k)$ as $k\rightarrow\infty$. Thus, as $k\rightarrow\infty$, $P_1(n(k))=p_{k+\ell(k)}\sim 2p_k\sim2\log n(k)$. 
\par 
To prove $(b)$, choose any $n\in F_r$ with $n>1$, and let $k(n)$ be the unique integer satisfying $\displaystyle{\prod_{i=b(r)+1}^{k(n)}p_i\leq n<\prod_{i=b(r)+1}^{k(n)+1}p_i}$. Using the Prime Number Theorem again, we have $Q_L(n)\leq p_{k(n)+L}\sim\log n$ as $n\rightarrow\infty$. In addition, for those $n\in F_r$ (guaranteed by Theorem \ref{Thm2.1}) of the form $\displaystyle{n=\prod_{i=b(r)+1}^{k(n)}p_i}$, we see that $Q_L(n)=p_{k(n)+L}\sim\log n$.   
\par 
Corollary \ref{Cor3.1} guarantees that the limit in $(c)$ is well-defined. To prove the limit, we use Lemma \ref{Lem3.3} to find that if $n\in F_r$ and $\omega(n)\geq K$, then 
\[\frac{P_K(n)}{\log n}<\lambda_K(r)^{-1}\frac{Q_{K-1}(n)}{\log n}+\frac{r}{\log n}.\]
Then the desired result follows from setting $L=K-1$ in $(b)$. 
\par 
Finally, $(d)$ follows immediately from Lemma \ref{Lem3.4} and from setting $L=1$ in $(b)$.  
\end{proof}

\end{document}